\theoremstyle{plain}
\newtheorem{thm}{Theorem}
\newtheorem{lem}[thm]{Lemma}
\newtheorem{lem*}[thm]{Lemma}
\newtheorem{prop}[thm]{Proposition}
\theoremstyle{definition}
\newtheorem{dfn}{Definition}
\theoremstyle{remark}
\newtheorem{rem}{Remark}
\newtheorem{rem*}{Remark}
\newtheorem{ex}[rem]{Example}
\newtheorem{example}[rem]{Example}
\numberwithin{rem}{section} %{subsection}
\numberwithin{dfn}{section} %{subsection}
\numberwithin{equation}{section} %{subsection}
\numberwithin{thm}{section} %{subsection}
\def\!{\operatorname{!}}
\def\be{\begin{equation}}
\def\ee{\end{equation}}
\def\beg{\begin{equation*}}
\def\eeg{\end{equation*}}
\def\sgn{\mathrm{sgn}}
\def\B{\mathbb B}
\def\1{\bold 1}
\begin{document}

\title{New approach to certain real hyper-elliptic integrals}

\author{Piotr Kraso\'n, Jan Milewski}
\begin{abstract}
In this paper we treat certain elliptic and hyper-elliptic integrals in a unified way. We introduce a new  basis of these  integrals coming from certain basis ${\phi}_n(x)$ of polynomials and show that the transition matrix between this basis and the traditional monomial basis is certain upper triangular band matrix. This allows us to obtain explicit formulas for the considered integrals. Our approach, specified to elliptic case, is more effective than known recursive procedures for elliptic integrals.
We also show that  basic integrals  enjoy symmetry coming from the action of the dihedral group $D_{M}$ on a real projective line. This action is closely connected with the properties of homographic transformation of a real projective line. This explains similarities occurring 
 in some formulas in popular tables of elliptic integrals. As a consequence one can  reduce the number of necessary formulas in a significant way. 
 We believe that our results will simplify programming and computing the hyper-elliptic integrals in various problems of mathematical physics and engineering.
\end{abstract}
\date{\today}

\address{  Institute of Mathematics, Department of Mathematics and Physics, University of Szczecin, ul. Wielkopolska 15, 70-451 Szczecin, Poland %\hb
}
\email{ piotrkras26@gmail.com}

\address{Institute of Mathematics, Faculty of Electrical Engineering, Pozna{\'n} University of Technology, ul. Piotrowo 3A, 60-965 Poznań, Poland}
\email{jsmilew@wp.pl}
\subjclass[2010]{33E05, 65D20, 33F05}
%22E50, 22E35, 11F70} %, 11F72, 22E55, 22E57}
\keywords{hyper-elliptic integrals, projective line, Riemann sphere}

\thanks{}
%11-02 Research exposition (monographs, survey articles)
%14G05 Rational points
%11D61 Exponential equations
%11D57 Multiplicative and norm form equations
%11J61 Approximation in non-Archimedean valuations
%14G25 Global ground fields
%11G35 Varieties over global fields

%11F70 Representation-theoretic methods; automorphic representations over
%local and global fields
%11F72 Spectral theory; Selberg trace formula
%22E35 Analysis on $p$-adic Lie groups
%22E50 Representations of Lie and linear algebraic groups over local field
%22E55 Representations of Lie and linear algebraic groups over global fields
%22E57 Geometric Langlands program: representation-theoretic aspects
%http://www.ams.org/msc/
%11F52 Modular forms associated to Drinfel'd modules
%11G20 Curves over finite and local fields
%11R39 Langlands-Weil conjectures, nonabelian class field theory
%11R52 Quaternion and other division algebras: arithmetic, zeta functions
%11R58 Arithmetic theory of algebraic function fields
%14H30 Coverings, fundamental group
%11S37 Langlands-Weil conjectures, nonabelian class field theory
\maketitle

%\begin{document}

\section{Introduction} %1 two-thms
 
The aim of this paper is to give explicit formulas for the integrals of the following form
\begin{equation}\label{int1}
I_{n,p}={\int}{\frac{(x-p)^ndx}{\sqrt{Q(x)}}}, \qquad n\in{\mathbb Z},
\end{equation}
where $Q(x)$ is a polynomial of degree $M$  with $M$ real  zeroes of multiplicity 1.   It is very well known that when $M>2$ the integrals are in general  not elementary. The cases $M=3$ and $M=4$ lead to elliptic integrals, whereas 
for $M\ge 5$ we obtain hyper-elliptic integrals. The computation of the elliptic integrals in general, and of type (\ref{int1}) in particular,  is based on a recursive formula for these integrals. In this way all elliptic integrals of the form (\ref{int1}) can be expressed as  linear combinations of fundamental  integrals  (\ref{uklad})    and  elementary functions. The coefficients at the  fundamental integrals  are rational functions (cf. \cite{kk00}, \cite{ps97})).
Nevertheless, the usage of this recursive formula is cumbersome. In this paper we offer a new method for computation of the integrals of type ( \ref{int1}) for arbitrary $M$ and $n.$ 
We give  explicit formulas for this computation. We choose a  special basis in the vector space generated by the family of monomials $(x-p)^n$, $n\in {\mathbb Z}$. %We assume that $Q(c)\neq 0.$ 
As the fundamental integrals  we choose $\{I_{-1,p},\,\, p\in{\mathbb R}, \,\, Q(p)\neq 0\}, I_{0}, \dots  I_{M-2}.$  For shorthand,    we denote the integrals $I_{i,0}$ as  $I_{i}, \quad i=0,\dots ,M-2.$
The choice of the basis allows one to compute all the integrals in the family (\ref{int1}) as   sums 
\beg
 \int \frac{x^n dx}{\sqrt{Q(x)}}=\sum_{l=0}^{M-2}B_{l,n}I_l+2\sum_{l=M-1}^n B_{l,n} x^{l+1-M}\sqrt{Q(x)}+C \eeg%\label{komb1}
for $n>M-2$ and
\beg
 \int (x-p)^n \frac{ dx}{\sqrt{Q(x)}}=\sum_{l=-1}^{M-2}U_{l,n}I_{l,p}+2\sum_{l=n}^{-2} U_{l,n} (x-p)^{{l+1}}\sqrt{Q(x)}+C \eeg%\label{komb2}
 for $n<-1.$
 
Notice that  the coefficients $B_{l,n}$ (resp.  $U_{l,n}$ )  constitute the $n$-th column of the upper triangular matrix $B$ (resp. U ) and can be found by an easy  recurrence.
This recursive procedure in the elliptic case is much simpler than the original one (cf. \cite{ps97}). We illustrate this in  the examples \ref{ex73} and \ref{ex74}.
For  a general hyper-elliptic integral our  method is very clear and easy to apply.  It is also efficient for big values of $n$. 

Symmetries for elliptic integrals and elliptic functions are interesting from both theoretical and computational 
points of view (cf. \cite{c64}, \cite{c04}, \cite{c06},\cite{c10}).
 Analyzing various tables with the elliptic integrals (cf. \cite{as72}, \cite{bf71}, \cite{gr00} ) we see that there are many similar formulas depending on between which roots  there is a variable of integration $x.$
Using appropriate permutations of the  roots  which constitute the dihedral group $D_{4}$ we can obtain many formulas  from the suitable one.   To do this we use one point compactification of a real line. 
Observe  that this approach can  also be applied in a hyper-elliptic case where  we obtain an action of the dihedral group $D_M.$
 We show that this action comes from the action of the homographic transformations on $S^1\cong {\mathbb P}^1_{\mathbb R}$.

\section{Recurrence}
In this section we prove that any hyper-elliptic integral of type (\ref{int1}) can be expressed in terms of fundamental integrals. This is well known for the elliptic case (cf. \cite{ps97}), the hyper-elliptic 
case is   a straightforward  generalisation. We include it for completeness. 

It is clear that for our purposes  it is enough to consider the following integrals
\be \label{wystarczy} I_{n}=
 \bigintssss  \frac{x^ndx}{\sqrt{Q(x)}}, \,\,\qquad {\mathrm{for}}\,\,n\geq 0\ee
 and
\be\label{enough}
I_n= \bigintssss \frac{(x-p)^ndx}{\sqrt{Q(x)}} \,\,\qquad{\mathrm{for}}\,\,n<0.
 \ee
The key fact we use is that certain combinations of hyper-elliptic integrals are elementary functions.
Indeed, consider the derivative of the product $2u^{n+1} \sqrt{Q(x)}$, where $u=x-p$, 
\be \label{uc1} Q(x)=\sum_{j=0}^M a_j x^j=\sum_{j=0}^M b_j u^j, \quad b_j=b_j(p). \ee
We have
\be \label{uc2} (2u^{n+1} \sqrt{Q(x)})'=\sum_{j=0}^M[2(n+1)+j]b_ju^{j+n}/\sqrt{Q(x)} .\ee
Integrating the formula (\ref{uc2})  one obtains
\be \label{uc3} \sum_{j=0}^M [2(n+1)+j] b_j I_{n+j,p}(x)=2(x-p)^{n+1}  \sqrt{Q(x)} +{C}. \ee
The equality (\ref{uc3}) shows that the integrals $I_{n,p}$ for $n\geq 0$ can be recursively written 
using $I_0, I_{1,p},\dots , I_{M-2,p}$ and $I_{M-1,p}.$
Substituting  $n=-1$  into (\ref{uc2}) we obtain 
\beg \sum_{j=0}^{M-1} (j+1)b_{j+1} I_{j,p}(x)=2\sqrt{Q(x)}+C. \eeg\label{aux1}

Thus we can compute $I_{M-1,P}$ in terms of the integrals $I_{0}, I_{1,p},\dots , I_{M-2,p}$
\beg\label{auxeq}
I_{M-1,p}=\frac{1}{Mb_M}\left(2\sqrt{Q(x)}- \sum_{j=0}^{M-2} (j+1)b_{j+1} I_{j,p}(x)\right) .
\eeg
Hence all integrals of the form $I_{n,p}$ for $n\geq 0$ can be expressed by means of
$ I_0, I_{1,p} \ldots I_{M-2,p} .$
        Naturally, 
\be \label{newton} I_{n,p}=\sum_k (-1)^k {n \choose k} p^k I_{n-k}.\ee
So, for $n\ge 0,$  these integrals can be expressed by 
$ I_0, I_1, \ldots I_{M-2} . $
In order to obtain integrals $I_{n,p}$ for all integers $n,$ we see that  by (\ref{uc3}),  it is enough to add to the last system, 
the integrals of the form $ I_{-1,p}.$ Hence the  integrals $I_{n,p}$ can be expressed by means of
the  integrals:
\be\label{uklad} I_{-1,p}, I_0, I_1, \ldots I_{M-2} . \ee
In the case when $p$ is a root of $Q$ the integral $I_{M-2} $ can be expressed by means of the remaining   integrals $I_{-1,p},  I_1 \ldots I_{M-3}$. This is because $b_0=0$ and the formula (\ref{uc2}) has one less nonzero terms cf. Example \ref{ex21}.  Equivalently,
the integral $I_{-1,p}$ can be expressed by $ I_1, \ldots I_{M-2}$.
This shows that one can take, as basic integrals, the following set
$ \{I_{-1,p}: \,\,p\in {\mathbb R} ,\,\, Q(p)\neq 0\}, I_0, I_1, \ldots I_{M-2}. $

 Thus we have proved the following:
\begin{prop}\label{prop1} The following integrals 
\beg\label{basis}  \{I_{-1,p}: \,\, p\in{\mathbb R}, \,\,Q(p)\neq 0\}, I_0, I_1, \ldots I_{M-2} .\eeg 
form a basis  for hyper-elliptic integrals  (\ref{int1}) i.e. any hyper-elliptic integral  can be expressed by a linear combination, with coefficients being rational functions, of the basic integrals and elementary functions.
\end{prop}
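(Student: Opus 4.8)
The plan is to assemble the claim directly from the recurrence computations already carried out in this section, so the proof is essentially a bookkeeping argument showing that the displayed set both spans and is independent. First I would establish spanning. The crucial input is the integration-by-parts identity (\ref{uc3}), which for each $n\ge 0$ expresses a linear combination $\sum_{j=0}^M [2(n+1)+j]b_j I_{n+j,p}$ as an elementary function $2(x-p)^{n+1}\sqrt{Q(x)}+C$. Since the leading coefficient $[2(n+1)+M]b_M$ is nonzero (as $b_M=a_M\ne 0$ and $2(n+1)+M>0$), this identity can be solved for the top-index integral $I_{n+M,p}$ in terms of lower-index integrals $I_{n,p},\dots,I_{n+M-1,p}$ plus an elementary function, with rational-function coefficients. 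Running this recursion downward shows every $I_{n,p}$ with $n\ge 0$ reduces to the finite set $I_{0,p},I_{1,p},\dots,I_{M-1,p}$; and the $n=-1$ case of the identity (the displayed equation after (\ref{uc3})) eliminates $I_{M-1,p}$ in favour of $I_{0,p},\dots,I_{M-2,p}$, using that $Mb_M\ne 0$. Next I would use the binomial expansion (\ref{newton}) to pass from the shifted integrals $I_{j,p}$ to the monomial integrals $I_j=I_{j,0}$, so that all $I_{n,p}$ with $n\ge 0$ lie in the span of $I_0,\dots,I_{M-2}$. For $n<0$, the same identity (\ref{uc3}), now read with $n$ negative and solved for the bottom-index integral, recursively reduces any $I_{n,p}$ to the single negative-index integral $I_{-1,p}$ together with the already-treated nonnegative ones. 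This exhausts all integers $n$ and establishes that the set in (\ref{uklad}) spans the space of integrals (\ref{int1}) modulo elementary functions.

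For the complementary statement — that these integrals genuinely form a basis, i.e. are linearly independent and none is redundant — I would argue that no further nontrivial linear relation with rational-function coefficients holds among $I_{-1,p}, I_0,\dots,I_{M-2}$ beyond those elementary ones already accounted for. The natural way is to observe that each basic integral has a distinct singularity structure: $I_{-1,p}$ has a genuine logarithmic-type singularity at $x=p$ (since $Q(p)\ne 0$) coming from the simple pole of $(x-p)^{-1}/\sqrt{Q(x)}$, whereas $I_0,\dots,I_{M-2}$ are the holomorphic differentials of the hyper-elliptic curve $y^2=Q(x)$ and are regular there; differentials of the first kind cannot be expressed through differentials of the third kind or elementary functions, and distinct residue data separate the $I_{-1,p}$ for different $p$. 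I would make this precise by passing to the associated differentials on the curve and invoking the standard fact that $\frac{dx}{\sqrt{Q}},\dots,\frac{x^{M-2}dx}{\sqrt{Q}}$ form a basis of holomorphic differentials in the relevant range, which are linearly independent over the constants and cannot be elementary.

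The main obstacle is this independence half of the statement. The spanning half is a pure consequence of the recurrences already derived, so it is routine; but the word \emph{basis} asserts minimality, and the notion of linear independence here is subtle because the coefficients are allowed to be rational functions and because equality is only modulo elementary functions. One must be careful that the claimed independence is relative to this equivalence and that the $I_{-1,p}$, as $p$ ranges over all admissible reals, are not secretly related — which is where the residue/singularity analysis at the varying point $p$ does the real work. I would therefore expect the bulk of the argument, and the place where genuine care is needed, to be the clean formulation of what \emph{basis} means in this function-theoretic setting and the verification that the proposed generators are irredundant, rather than the mechanical spanning reduction. In the write-up I would make the degenerate case $Q(p)=0$ explicit, noting as in the remark before the proposition that then $b_0=0$ forces one fewer term in (\ref{uc2}) and $I_{-1,p}$ becomes expressible through $I_1,\dots,I_{M-2}$, which is exactly why $p$ is restricted to $Q(p)\ne 0$ in the basis.
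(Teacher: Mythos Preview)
Your spanning argument is exactly the paper's: the entire proof in the paper is the discussion preceding the proposition (the \qed immediately follows the statement), and it consists precisely of the recurrence~(\ref{uc3}), the elimination of $I_{M-1,p}$ via the $n=-1$ case, the binomial reduction~(\ref{newton}) from $I_{j,p}$ to $I_j$, and the observation that for $n<0$ one only needs to adjoin $I_{-1,p}$. You have reproduced this faithfully.

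Where you diverge is in taking the word \emph{basis} seriously and supplying an independence argument. The paper does not do this: note that the statement itself glosses ``form a basis'' by the clause ``i.e.\ any hyper-elliptic integral can be expressed by a linear combination\ldots'', which is only the spanning claim. So your independence discussion is additional content, not a different route to the same result. That said, your sketch of independence contains a slip worth flagging: it is not true that $\frac{dx}{\sqrt{Q}},\dots,\frac{x^{M-2}\,dx}{\sqrt{Q}}$ are all holomorphic differentials on $y^2=Q(x)$. For $\deg Q=M$ the genus is $\lfloor (M-1)/2\rfloor$, so only roughly half of these are of the first kind; the remaining ones are of the second kind (poles at infinity with zero residue). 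The residue-at-$p$ argument still separates each $I_{-1,p}$ from the rest, but distinguishing the $I_0,\dots,I_{M-2}$ among themselves requires tracking pole orders at infinity rather than invoking holomorphicity. If you want to keep the independence half, that is the place to tighten.
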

\qed

The following example illustrates this.

{\begin{example}\label{ex21} Let $Q=b_3x^3+b_2x^2+b_1x$. Hence $0$ is a root of $Q$ and
$$ \left( \frac{2}{x} \sqrt{Q(x)} \right)'=(b_3x-\frac{b_1}{x})/\sqrt{Q(x)} .$$
So, 
$ b_3I_1-b_1I_{-1}=\frac{2}{x} \sqrt{Q(x)}. $
 This shows that the integral $I_{-1}$ can be expressed by $I_1$ and vice versa.
In this case one can take as the basic integrals either 
$ \{I_{-1, p} : p\in {\mathbb R}
\}, I_0 $
or
$ \{I_{-1, p} : \,\, p\in {\mathbb R},\,\,Q(p)\neq 0\}, I_0, I_1  .$
\end{example}

%\newpage

\section{hyper-elliptic integrals for a polynomial factor.}

We put $p=0$ and consider $n\geq -1$ in (\ref{uc2})-(\ref{uc3}). 
 For this case $u=x, b_j=a_j$ and  shifting properly $n$ we have
\be \label{u0} (2x^{n+1-M} \sqrt{Q(x)})'=\sum_{l=n-M}^n(l+n-M+2) a_{l+M-n}x^{l}/\sqrt{Q(x)} , \quad n\geq M-1.\ee
Observe that for $n=M-1$ the summand for $l=-1$ in (\ref{u0}) equals $0.$
\begin{dfn}\label{polbas}
Define a basis in the polynomial space:
\be\label{sbas}
\phi _n (x)= \begin{cases}
  x^n  \quad {for} \quad n<M-1  \\
   \sum_{l=n-M}^n(l+n-M+2) a_{l+M-n}x^{l}  \quad {for} \quad n\geq M-1.
  \end{cases} 
   \ee
   
\end{dfn}

\begin{rem}
We chose the basis (\ref{sbas}) so that the computation of the integral  $\bigintssss \frac{\phi _n (x) dx}{\sqrt{Q(x)}} $ is obvious.

\end{rem} 
From (\ref{u0}) one gets
\[ \int \frac{\phi _n (x) dx}{\sqrt{Q(x)}}= I_n(x) +C  \quad {for} \quad n<M-1  \]
and
\beg  \int \frac{\phi _n (x) dx}{\sqrt{Q(x)}}=2x^{n+1-M} \sqrt{Q(x)}  +C  \quad {for} \quad  n\geq M-1. \eeg
The transition matrix from the standard basis
 $e_n(x)=x^n$ to the new basis  $\phi _n(x)$ is an invertible upper triangular $\infty \times \infty$ matrix. Call it  $A$. Moreover, the left top block $(M-1)\times (M-1)$ of $A$ is the unit matrix. and the left lower block 
of type $\infty \times (M-1)$ is the zero matrix. 
More precisely, we have the following:
\begin{lem}
 The entries of $A$ are given by the following formula:
\beg
A_{l,n}= \begin{cases}
\delta_{l,n} \quad for \quad n=0, \ldots M-2, \\
 0 \quad for \quad l>n \quad or \quad l< n-M .\\
 (l+n-M+2)a_{l+M-n}
\quad for \quad  n-M\leq l \leq n \quad when  \quad n\geq M \\
\qquad\qquad \qquad\qquad\qquad\quad and \quad for \quad   0\leq l \leq M-1 \quad when \quad  n=M-1.
\end{cases}
\eeg
\end{lem}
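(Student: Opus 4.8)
The plan is to read off the matrix entries directly from the definition of the basis $\phi_n(x)$ in Definition \ref{polbas}, since the matrix $A$ is by construction the change-of-basis matrix sending $e_n(x)=x^n$ to $\phi_n(x)$. Concretely, the $n$-th column of $A$ records the coefficients expressing $\phi_n(x)$ in terms of the monomials $e_l(x)=x^l$, so the entries $A_{l,n}$ are simply the coefficients of $x^l$ appearing in $\phi_n(x)$. First I would treat the case $n=0,\ldots,M-2$: here the definition gives $\phi_n(x)=x^n=e_n(x)$, so the coefficient of $x^l$ is $\delta_{l,n}$, which is exactly the first case of the claimed formula and confirms that the top-left $(M-1)\times(M-1)$ block is the identity.

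Next I would handle the main case $n\geq M$. By Definition \ref{polbas} we have
\beg
\phi_n(x)=\sum_{l=n-M}^n (l+n-M+2)\,a_{l+M-n}\,x^l,
\eeg
so the coefficient of $x^l$ is $(l+n-M+2)a_{l+M-n}$ precisely when $n-M\leq l\leq n$, and is $0$ otherwise — that is, when $l>n$ or $l<n-M$. This reproduces both the second and third cases of the formula simultaneously. The only subtlety to check here is the indexing: as $l$ runs over $n-M,\ldots,n$ the subscript $l+M-n$ on $a$ runs over $0,\ldots,M$, so every coefficient $a_0,\ldots,a_M$ of $Q$ appears exactly once, and no out-of-range coefficient is invoked. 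I would note in passing that the extreme term $l=n-M$ carries the factor $(l+n-M+2)=2(n-M)+2$ and $a_0$, while $l=n$ carries $a_M$ with factor $M+2$, consistent with the leading behaviour.

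Finally I would address the boundary case $n=M-1$, which requires care because it sits at the junction of the two branches of the definition. The formula $\phi_{M-1}(x)=\sum_{l=-1}^{M-1}(l+1)a_{l+1}x^l$ from (\ref{sbas}) formally includes a term at $l=-1$, but as observed immediately after equation (\ref{u0}), that summand has coefficient $(l+1)=0$ and hence drops out, leaving a genuine polynomial with $0\leq l\leq M-1$. Thus for $n=M-1$ the nonzero entries are $A_{l,M-1}=(l+1)a_{l+1}$ for $0\leq l\leq M-1$, which matches the third case of the claimed formula under the substitution $n=M-1$ (where $l+n-M+2=l+1$ and $a_{l+M-n}=a_{l+1}$). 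This case is the one place where a mismatched index would cause an error, so I expect verifying that the $l=-1$ term vanishes — and therefore that the column indeed lives in the polynomial space with no negative powers — to be the main (though very minor) obstacle; everything else is immediate from unwinding the definition.
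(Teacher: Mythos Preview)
Your argument is correct and is exactly the approach the paper takes: the lemma is stated with only a \qed, the entries being read off directly from Definition~\ref{polbas}. One harmless slip in your passing remark: the factor at $l=n$ is $2n-M+2$, not $M+2$ (the latter is the special case $n=M$), but this does not affect the proof.
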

\qed

\begin{rem}
In fact, the transition matrix is  an upper triangular  matrix with the following block structure:

\beg A =
	\left[
	\begin{array}{c|c}
	I  & C  \\
		\hline
	0 & D \\
	\end{array}
	\right] ,
\eeg
where
{\scriptsize
\beg C =
\left[
	\begin{array}{cccccccc}
	a_1        &    2a_0   &         &        &         &        &       &  \\
	2a_2       &    3a_1   &  4a_0   &        &         &        &       &  \\
  3a_3       &    4a_2   &  5a_1   & 6a_0    &       &         &      &        \\
	
	%\vspace{0.5cm} \\
	
  \quad \quad \vdots \quad \quad \ddots  & \quad \quad  \ddots   & \quad \quad \ddots  & \quad \quad \ddots  &   &       &      &        \\
	
	%\vspace{0.5cm} \\
	
  \quad \quad \vdots \quad \quad \ddots  & \quad  \ddots   & \quad \ddots  & \quad \ddots  & \quad \ddots  &       &      &        \\
		
(M-1)a_{M-1} & Ma_{M-2}  & (M+1)a_{M-3} & \ldots & &  (2M-2)a_0 &    &

\end{array}
	\right] ,
\eeg
{\normalsize and}

\beg 
D=
\left[
	\begin{array}{cccccccccc}
Ma_M   &  (M+1)a_{M-1}	&  \ldots        & (2M-1)a_1 & 2M a_0 & & & & & \\
     &  (M+2) a_M      &  (M+3)a_{M-1}   &  \ldots   & (2M+1)a_1&   &(2M+2)a_0 & & &  \\
		
	&	\quad \quad  \quad \quad \ddots  & \quad \quad  \ddots   & \quad \quad \ddots   & \ldots   & \ddots  &  \ddots     &      &        \\
	
	%\vspace{0.5cm} \\
	
  & & \ddots \quad \quad \quad &  \ddots  \quad \quad \quad \quad  & \ddots  &  \ldots  & \ddots  &    \ddots   &      &        \\

\end{array}
	\right] , \eeg
	}
\end{rem}

Hence
$ \phi _n (x)=\sum_l A_{l,n}x^l   . $
Let $B$ be the inverse of $A,$ i.e.
$x^n=\sum_l B_{l,n}\phi _l(x) . $
Thus
\be\label{komb11}  \int \frac{x^n dx}{\sqrt{Q(x)}}=\sum_{l=0}^{M-2}B_{l,n}I_l+2\sum_{l=M-1}^n B_{l,n} x^{l+1-M}\sqrt{Q(x)}+C .\ee
The coefficients $B_{i,j}$ as the entries of  the inverse $B$ of the upper triangular matrix $A$ can easily be  found taking into account 
that $B$ has the following block form:

	\be\label{formB}
	B =
	\left[
	\begin{array}{c|c}
	I  & -C D^{-1}  \\
		\hline
	0 & D ^{-1}\\
	\end{array}
	\right] .
	\ee
Observe that matrices $C,D$ have  band structures. Entries in any non-zero diagonal are multiples of suitable coefficients of the polynomial $Q$ and they form an arithmetic progression with the difference being  a double of a coefficient 
of the polynomial.

%\newpage

\section{Hyper-elliptic integrals for a rational factor.}

In a similar way, we determine the integrals of the  family:
\beg  \int (x-p)^n\frac{ dx}{\sqrt{Q(x)}}, \quad n<-1, \;  Q(p)\neq 0 . \eeg
For a chosen $p,$ consider the linear space $V$ generated by the  family of monomials:
\beg f_n (x)= (x-p)^n  \quad {for} \quad -\infty<n<M-1 . \eeg
Taking into account  (\ref{uc1})-(\ref{uc3}) we define the following   basis of $V$:
\begin{dfn}
\beg \psi_n (x)=\begin{cases}  (x-p)^n  \quad {for} \quad -1\leq n<M-1,\\
  \sum_{l=n}^{n+M}(l+n+2) b_{l-n}(x-p)^{l}  \quad {for} \quad n< -1. \end{cases} \eeg
\end{dfn}
Hence, from (\ref{uc2}) one gets
\beg \int \frac{\psi _n (x) dx}{\sqrt{Q(x)}}= I_{n,p}(x) +C  \quad {for} \quad  -1\leq n<M-1   \eeg\label{bb1}
and
\beg  \int \frac{\psi _n (x) dx}{\sqrt{Q(x)}}=2(x-p)^{{n+1}} \sqrt{Q(x)}  +C  \quad {for} \quad  \quad n< -1. \eeg
We have the following:
\begin{lem}
Enumerate rows and columns of the transition  matrix $T$ from  $\{f_n(x)\}$ to  $\{\psi_n (x)\}$ by integral indices  $n\leq M-2$ in the decreasing order.
The matrix  $T$ is an invertible upper triangular $\infty \times \infty$ matrix .
 The left top block $M\times M$ of $T$ is the unit matrix. 
More precisely, the  matrix elements are as follows:
\beg\label{anb}
T_{l,n}= \begin{cases}\delta_{l,n} \quad for \quad n= M-2, \ldots ,0, -1, \\
 0 \quad for \quad l<n \quad and \quad l> n+M ,\\
  (l+n+2) b_{l-n} \quad for \quad n\leq l\leq n+M,  \quad n<-1  .
\end{cases}
\eeg
\end{lem}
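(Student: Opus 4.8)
The plan is to read off the entries of $T$ directly from the definition of $\psi_n$, mirroring the verification carried out for the matrix $A$ in the polynomial case. The starting observation is the bookkeeping convention for a transition matrix: the $n$-th column of $T$ records the expansion $\psi_n(x)=\sum_l T_{l,n}f_l(x)$ of the new basis vector in the old basis $\{f_l(x)\}=\{(x-p)^l\}$. Consequently each assertion of the lemma becomes a matter of inspecting the two branches in the definition of $\psi_n$, together with the effect of enumerating indices in decreasing order.

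First I would dispose of the range $-1\le n\le M-2$, where the definition reads $\psi_n(x)=(x-p)^n=f_n(x)$, so that $T_{l,n}=\delta_{l,n}$. As these $M$ indices are exactly $M-2,M-3,\dots,0,-1$, the corresponding columns are standard basis vectors supported within that same block of indices, which is precisely the claim that the top left $M\times M$ block of $T$ is the identity. Next, for $n<-1$, I would expand the defining sum $\psi_n(x)=\sum_{j=0}^M[2(n+1)+j]\,b_j(x-p)^{j+n}$ coming from (\ref{uc2}) and substitute $l=j+n$: as $j$ runs from $0$ to $M$ the exponent $l$ runs from $n$ to $n+M$, while $2(n+1)+j=l+n+2$ and $b_j=b_{l-n}$. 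This produces $\psi_n(x)=\sum_{l=n}^{n+M}(l+n+2)b_{l-n}(x-p)^l$, and comparing coefficients gives $T_{l,n}=(l+n+2)b_{l-n}$ on $n\le l\le n+M$ and $T_{l,n}=0$ elsewhere, exactly as stated.

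It then remains to confirm the three structural features. Upper triangularity in the chosen ordering means $T_{l,n}=0$ for $l<n$ (indices decrease along rows and columns), which follows at once from $\delta_{l,n}$ in the first branch and from the support condition $n\le l\le n+M$ in the second; the latter simultaneously shows that only the $M+1$ diagonals $l-n\in\{0,1,\dots,M\}$ are occupied, and that along each of them the entries $(l+n+2)b_{l-n}$ form an arithmetic progression in $n$ whose common difference is (up to sign) twice a coefficient of $Q$. Invertibility I would read off the diagonal: $T_{n,n}=1$ for $-1\le n\le M-2$, and $T_{n,n}=(2n+2)b_0$ for $n<-1$.

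The one genuine point to check — and the step I expect to be the main obstacle, modest as it is — is the nonvanishing of the diagonal in the second branch. Here $b_0=b_0(p)=Q(p)\neq 0$ by the standing hypothesis $Q(p)\neq 0$, while the integer $2n+2$ cannot vanish because $n<-1$ forces $n\le -2$ and hence $2n+2\le -2$. Thus every diagonal entry is a nonzero scalar, so the upper triangular matrix $T$ is invertible, confirming in particular that $\{\psi_n\}$ is a basis of $V$. Everything else is a transcription of the definition, with the reversal of the index ordering serving only to place the unchanged vectors $f_{-1},\dots,f_{M-2}$ into the finite identity block at the top left.
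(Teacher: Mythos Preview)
Your argument is correct and is precisely the intended one: the paper itself offers no proof beyond a \qed, since the entries of $T$ are read off directly from the two branches of the definition of $\psi_n$, and you have simply written out that verification (including the nonvanishing of the diagonal via $b_0=Q(p)\neq 0$ and $2n+2\neq 0$ for $n<-1$). There is nothing to add.
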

\qed
\begin{rem}
The transition matrix is again upper triangular and has a block structure.

\beg T =
	\left[
	\begin{array}{c|c}
	I  & Y  \\
		\hline
	0 & W \\
	\end{array}
	\right] ,
\eeg
where
{\scriptsize
\beg Y =
\left[
	\begin{array}{cccccccc}
	(M-2)b_M        &               &         &        &         &        &       &  \\
	(M-3)b_{M-1}    &    (M-4)b_M   &         &        &         &        &       &  \\
  (M-4)b_{M-2}    &   (M-5)b_{M-1}   &  (M-6)b_{M}     &     &       &         &      &        \\
	
	%\vspace{0.5cm} \\
	
  \quad \quad \vdots \quad \quad \ddots  & \quad \quad  \ddots   & \quad \quad \ddots  & \quad \quad \ddots  &   &       &      &        \\
	
	\vspace{0.5cm} \\
	
  %\quad \quad \vdots \quad \quad \ddots  & \quad  \ddots   & \quad \ddots  & \quad \ddots  & \quad \ddots  &       &      &        \\
		
0b_2 &-b_3  & -2b_4& \ldots & - (M-2)b_M&   & \quad   & \\

-b_1 &-2b_2  & -3b_3& \ldots & &  -Mb_M & \quad   &

\end{array}
	\right] ,
\eeg

{\normalsize and}

\beg
W=
\left[
	\begin{array}{cccccccccc}
-2b_0   &  -3b_1	&  \ldots        & -(M+2)b_M &  & &  & & \\
     &  -4b_0      & -5b_1   &  \ldots   & -(M+4)b_M&    & & &  \\
		
	&	\quad \quad  \quad \quad \ddots  & \quad \quad  \ddots   & \quad \quad \ddots   & \ldots   & \ddots \quad      &      &        \\
	
	%\vspace{0.5cm} \\
	
  %& & \ddots \quad \quad \quad &  \ddots  \quad \quad \quad \quad  & \ddots  &  \ldots  & \ddots  &    \ddots   &      &        \\

\end{array}
	\right] , \eeg
	}
\end{rem}

So,
$ \psi _n (x)=\sum_l T_{l,n}(x-p)^l  .$
Let $U$ be the inverse of $T,$ i,e.
$(x-p)^n=\sum_l U_{l,n}\psi _l(x) . $
Hence,
\be\label{komb22}  \int (x-p)^n \frac{ dx}{\sqrt{Q(x)}}=\sum_{l=-1}^{M-2}U_{l,n}I_{l,p}+2\sum_{l=n}^{-2} U_{l,n} (x-p)^{{l+1}}\sqrt{Q(x)}+C ,\ee

%\newpage
where $U$ has the following  form:
	
	\be
	U =
	\left[
	\begin{array}{c|c}
	I  & -Y W^{-1}  \\
		\hline
	0 & W ^{-1}\\
	\end{array}
	\right] .
	\ee
Similarly as the matrices  $C,D$,  the matrices $Y$ and $W$ have band structures and  entries in any diagonal  are multiples of  suitable coefficients of the polynomial $Q$ (see  (\ref{uc1}) ). We also observe, the analogous to those appearing in the matrices $C$ and $D,$ arithmetic progressions on the non-zero diagonals.
For expressing  (\ref{komb22}) as a combination of basic integrals one  replaces, for a positive $l,$ the integral $I_{l,p}$ by (\ref{newton}).}

%\newpage

\section{$D_N$ - action} 
%(\ref{frm})
\begin{dfn}
We call a  sequence of real numbers 
\begin{equation} \label{cyklmonot} (y_1, \dots y_N) \end{equation}
cyclically monotonous if there exists a cyclic permutation ${\sigma}\in {\cal S}_N$ of length $N$ such that 
\begin{equation} \label{monot} (y_{{\sigma(1)}},\dots, y_{{\sigma}_N}) \end{equation}
is strictly monotonous in the usual sense. In case where (\ref{monot}) is strictly increasing we say that   (\ref{cyklmonot})  is cyclically increasing and analogously we define a cyclically decreasing sequence.
\end{dfn}
\begin{rem}
On a real projective line there exists a canonical positive orientation on $S^1\cong P^1_R$ coming from  the positive direction of the real line ${\mathbb R}$.  Notice that a cyclically monotonous sequence of real numbers yields an orientation on $S^1\cong P^1_R$. In case of a cyclically increasing sequence  this orientation is positive (cf. Fig. 1).  Cyclically decreasing sequence leads to negative orientation.\end{rem}
% For one-column wide figures use

Consider  the  set of permutations
$\{\tau_k, \eta_k , k=1,\ldots , N \}\subset {\cal S}_{N}$  defined by the action on the $N$-tuples of real numbers, viewed as the elements of the real projective space ${\mathbb P}^1_{\mathbb R}\cong S^1$:
\beg \tau_k (y_1,\ldots , y_N)=(y_{k+1}, y_{k+2}, \ldots , y_N, y_1, \ldots, y_k), \eeg
\beg \eta_k (y_1,\ldots , y_N)=(y_{k}, y_{k-1}, \ldots , y_1, y_N, \ldots, y_{k+1}). \eeg
The permutations 
 $\tau_k, \eta_k$   transform cyclically monotonous sequences into cyclically monotonous sequences. In fact ${\tau}_k$ transform cyclically decreasing (resp. increasing) sequences into cyclically decreasing (resp. increasing) sequences.   Permutations  $\eta _k$  reverse cyclic monotonicity.
This set forms a subgroup of ${\cal S}_N$ isomorphic to the dihedral group $D_{N}.$ 

\begin{rem}\label{rem51} Notice that
%This is of course due to the fact 
that   ${\tau}_k$ preserves and ${\eta}_k$ reverses an orientation of  $S^1$ derived from a cyclically monotonous sequence.
\end{rem}
%Notice that the cyclic permutation ${\tau}_k$ preserves an  orientation of $S^1$ whereas ${\eta}_k$ reverses it.

Let $(a_1,\ldots , a_N)$ be an  increasing sequence of  roots of a polynomial   $Q.$ 
%This  yields the orientation on $S^1.$
We consider integrals of the form
  (\ref{wystarczy}) and (\ref{enough}). In the next section we show how to transform, for a cyclically monotonous sequence  of roots of $Q(x),$ hyper-elliptic integrals  (\ref{wystarczy}) and (\ref{enough}) into the Riemann canonical form. The choice of a transformation into the canonical form depends on the interval in which a variable of integration $x$  is supposed to be. Assume $x\in (a_N, \infty)$ or $x\in (-\infty, a_1)$ then to assure
that  
  \be \label{przedzial} x\in (a_k, a_{k+1}) \ee
we choose  the transformation described in Corollary \ref{corR}
 for either
\be \label{przes} (x_1, \ldots , x_N)=\tau_k (a_1,\ldots , a_N)\ee
or
\be \label{odbicie} (x_1, \ldots , x_N)=\eta_k (a_1,\ldots , a_N).\ee
With the change of interval to which $x$ belongs the index $k$ changes in 
 (\ref{przedzial}) (cf. Fig.1).

\begin{figure}
\setlength{\unitlength}{0.1in}
% Use the relevant command to insert your figure file.
% For example, with the graphicx package use
 % \includegraphics{example.eps}
  \begin{picture}(20,15) % picture environment with the size (dimensions)
% 32 length units wide, and 15 units high. \put(3,4){\framebox(6,3){$H_{B}(q)$}}
\put(10,7){\circle{10}}
\put(5,7){\circle*{0.3}}
\put(2,7){$x_{k+2}$}
\put(4.8,11.5){$x_{k+1}$}
\put(15,7){\circle*{0.3}}
%\put(14,10){\circle*{0.3}}
\put(6.9,11){\circle*{0.3}}
\put(7,3){\circle*{0.3}}
\put(5,2){$x_N$}
\put(11,2.1){\circle*{0.3}}
\put(10.8,0,8){$x$}
\put(5.8,-1){$L(x_N,x_1)$}
\put(12,2.45){\circle*{0.3}}
\put(12,1){$x_1$}
\put(14.0,3.9){\circle*{0.3}}
\put(14.5,2.6){$x_2$}
\put(13.4,0.2){$L(x_1,x_2)$}
%\put(9,11.9){\circle*{0.3}}
\put(16,7){$x_{k}$}
%\put(15,10){$x$}
%\put(8.5,12.5){$b$}

\put(3.5, 5.5){\circle*{0.15}}
\put(4.15, 3.5){\circle*{0.15}}
\put(3.7, 4.5){\circle*{0.15}}

%\put(16.3, 5.5){\circle*{0.15}}
%\put(14.4, 2.5){\circle*{0.15}}
%\put(16, 4.5){\circle*{0.15}}
%\put(15.4, 3.5){\circle*{0.15}}
%\put(15.2, 3.3){\circle*{0.15}}
\put(15.4, 3.8){\circle*{0.15}}
\put(15.8, 4.7){\circle*{0.15}}
\put(16.0, 5.7){\circle*{0.15}}

\put(12.5,11){\line(1,1){0.4}}
\put(13.3,11.5){$[\pm{\infty}]$}
\put(18,11){$L(x_k,x_{k+1})$}
\put(-3,10){$L(x_k,x_{k+1})$}

\put(5.5,9.3){\line(0,1){0.4}}
\put(5.5,9.3){\line(1,0){0.4}}
%\put(5.5,9.3){\vector(-0.3,-1){0.2}}

\end{picture}

% figure caption is below the figure
\caption{Monotonous sequence in $P^1_R$}
\label{fig:1}       % Give a unique label
\end{figure}
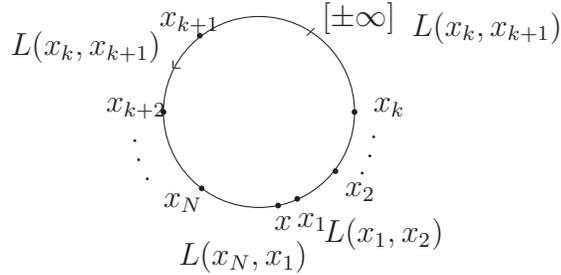

The roots  $(x_1,x_2,,\ldots , x_N))$ divide the projective line ${\mathbb P}_{\mathbb R} ^1$ into $n$ arcs $L(x_1,x_2)$, $L(x_2,x_3)$, \ldots, $L(x_{N-1},N)$, $L(x_N, x_1)$,  positively or negatively oriented  depending  on the type of cyclic monotonicity  of the sequence  $(x_1,x_2,\ldots ,x_N)$.    
let $x$ be a number different from the roots. Among all possible cyclically monotonous sequences of the roots we choose as $x$-canonical those which satisfy
$x\in L(x_N, x_1)$.  There are only two  such sequences: one is cyclically increasing, the other is cyclically decreasing. 
Moreover, each of the $x$-canonical sequences can be obtained from any cyclically monotonous sequence of roots by applying  a suitable transformation:  either ${\tau}_k$ or ${\eta}_k.$

%\newpage

%Wszystkie mozliwosci ($k=1,\ldots ,N$) w (\ref{przedzial}) mozna otrzymac za pomoca dzialania grupy $D_N$
Accordingly, there is a change  in enumeration of the roots due to the condition
 (\ref{przedzial}).
 We also see that  in the orbit of the action of the group    $D_N,$ all possibilities of 
 (\ref{przedzial}) are obtained  twice,  cf. (\ref{przes}) and (\ref{odbicie}). 

Recall that the cross-ratio of the numbers $d_1, d_2, d_3, d_4$ is given by the following formula:
 \beg (d_1, d_2; d_3, d_4)=\frac{(d_3-d_1)(d_4-d_2)}{(d_3-d_2)(d_4-d_1)}.   \eeg

\begin{lem}\label{lemat51} Let   $(a,b,c)$ be a cyclically increasing (resp. decreasing). Then the following homographic transformation: 
\[ f:  P^1_R \longrightarrow  P^1_R, \quad f(x)=(b,c;a,x) \]
preserves (resp. reverses) an orientation of the real projective line.
\end{lem}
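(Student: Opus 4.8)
The plan is to turn the cross-ratio into an explicit fractional-linear formula and then read off the orientation from the sign of a determinant. Substituting $d_1=b,\ d_2=c,\ d_3=a,\ d_4=x$ into the definition of the cross-ratio gives
\[
f(x)=(b,c;a,x)=\frac{(a-b)(x-c)}{(a-c)(x-b)}=\frac{(a-b)x-(a-b)c}{(a-c)x-(a-c)b},
\]
so $f$ is a real homography $f(x)=(\alpha x+\beta)/(\gamma x+\delta)$ with $\alpha=a-b,\ \beta=-(a-b)c,\ \gamma=a-c,\ \delta=-(a-c)b$. As a check that this is genuinely a homography of $P^1_R$, note that $f$ sends $a\mapsto 1$, $b\mapsto\infty$ and $c\mapsto 0$.

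First I would record the orientation criterion for real homographies. Such an $f$ is a homeomorphism of $S^1\cong P^1_R$, and on $\mathbb{R}$ minus its pole one has $f'(x)=(\alpha\delta-\beta\gamma)/(\gamma x+\delta)^2$, whose sign is constant and equal to the sign of the determinant $\Delta=\alpha\delta-\beta\gamma$. Thus when $\Delta>0$ the map $f$ is increasing on each component of $\mathbb{R}\setminus\{\text{pole}\}$, i.e.\ it moves points in the positive direction of the circle, and hence preserves the canonical orientation of $P^1_R$; when $\Delta<0$ it is decreasing and reverses the orientation. Equivalently, this is the standard fact that $\PSL_2(\mathbb{R})\subset\PGL_2(\mathbb{R})$ acts on $S^1$ by orientation-preserving maps, the other coset reversing orientation.

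It then remains to compute $\Delta$ and track its sign. A direct expansion gives
\[
\Delta=\alpha\delta-\beta\gamma=-(a-b)(a-c)b+(a-b)(a-c)c=(a-b)(a-c)(c-b),
\]
so I must determine the sign of $P:=(a-b)(a-c)(c-b)$ from the cyclic monotonicity type of $(a,b,c)$. Here I would exploit the symmetry of $P$: one checks directly that $P$ is invariant under the cyclic rotation $(a,b,c)\mapsto(b,c,a)$ and changes sign under the transposition $(a,b,c)\mapsto(a,c,b)$. Since every cyclically increasing triple is a cyclic shift of one in which the entries appear in increasing order, and a cyclically decreasing triple differs from such by a reversal, it suffices to evaluate $P$ on a representative with $a<b<c$; there $(a-b)<0$, $(a-c)<0$, $(c-b)>0$, so $P>0$. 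Hence $\Delta>0$ precisely for cyclically increasing $(a,b,c)$ and $\Delta<0$ precisely for cyclically decreasing $(a,b,c)$, which together with the criterion of the previous paragraph yields the asserted ``preserves (resp.\ reverses)''.

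The main obstacle is the orientation criterion itself: one must justify passing from ``$f'$ has constant sign on $\mathbb{R}\setminus\{\text{pole}\}$'' to ``$f$ preserves (resp.\ reverses) the circular orientation'', since the pole and the point $\infty$ are exactly where the ordinary derivative does not apply. The cleanest way to close this gap is to invoke the $\PGL_2(\mathbb{R})$-action fact quoted above, or to argue directly on an oriented triple: a homography is globally orientation-preserving or globally orientation-reversing, and since $f$ carries $(a,b,c)$ to $(1,\infty,0)$---which is a positively oriented triple of $P^1_R$, being a cyclic shift of $(0,1,\infty)$---it preserves orientation exactly when $(a,b,c)$ is positively (that is, cyclically increasingly) oriented and reverses it otherwise. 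The remaining computations in the other steps are routine sign bookkeeping.
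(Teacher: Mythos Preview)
Your proof is correct and follows essentially the same line as the paper's own argument: the paper simply observes that $f'(x)$ has constant positive (resp.\ negative) sign for $x\neq b$ according to the cyclic monotonicity type of $(a,b,c)$, and then glues $\pm\infty$ to pass from local monotonicity on $\mathbb{R}\setminus\{b\}$ to orientation on $P^1_{\mathbb R}$. You have made this explicit by computing the determinant $\Delta=(a-b)(a-c)(c-b)$ and analyzing its sign via the cyclic symmetry of the product, and you have additionally supplied the cleaner alternative via the image triple $(a,b,c)\mapsto(1,\infty,0)$, which the paper does not mention; both additions strengthen rather than diverge from the paper's sketch.
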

\begin{proof}.
It is enough to notice that for  $(a,b,c)$ a cyclically increasing (resp. decreasing) sequence the derivative $f'(x)$ is positive (resp. negative) for $x\neq b$. 
Hence  the homography $f$ as a map $R\longrightarrow R$  is locally strictly increasing (res. decreasing)  depending on the type of cyclic monotonicity  of the sequence $(a,b,c)$. 
Gluing the points $\pm \infty \,$, i.e. $ [\infty]=\{ \pm \infty \}$ , (cf. Fig.1) in the domain and range we obtain the assertion.
\end{proof}
%{\it Jako konsekwencje lematu 5.1 oraz remarku x dostajemy}

\begin{lem}\label{lemat52}
Let  ${\mathbf x}=(x_1, \ldots , x_N)$, $N\geq 4$ be a cyclically monotonous sequence. Then the following sequence:
\[ t_k:= (x_{N-1}, x_N; x_1,x_k)\; \quad k=2,\ldots , N-2 \]
is strictly increasing with terms greater than $1$.
\end{lem}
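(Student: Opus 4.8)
The plan is to recognize the sequence $t_k$ as the values of a single homography applied to the points $x_k$, and then to read off both assertions from the monotonicity of that homography. Writing $f(x)=(x_{N-1},x_N;x_1,x)$, the cross-ratio formula gives $t_k=f(x_k)$, and $f$ is precisely the homography of Lemma \ref{lemat51} with $(a,b,c)=(x_1,x_{N-1},x_N)$. From the defining formula one reads off the three special values $f(x_N)=0$, $f(x_1)=1$ and $f(x_{N-1})=\infty$, so $f$ carries the triple $(x_1,x_N,x_{N-1})$ to $(1,0,\infty)$.

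First I would check that the triple $(x_1,x_{N-1},x_N)$ inherits the cyclic monotonicity of $\mathbf x$. Since $x_{N-1},x_N,x_1$ are (cyclically) consecutive among the roots, they occur in the induced cyclic order $x_{N-1}\to x_N\to x_1$ on $S^1\cong\P^1_{\mathbb R}$; hence $(x_1,x_{N-1},x_N)$, being a cyclic shift of $(x_{N-1},x_N,x_1)$, is cyclically increasing exactly when $\mathbf x$ is, and cyclically decreasing exactly when $\mathbf x$ is. By Lemma \ref{lemat51}, $f$ therefore preserves the orientation of $\P^1_{\mathbb R}$ in the increasing case and reverses it in the decreasing case.

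Next I would track the images of the remaining points. In either case the tuple $(f(x_1),\dots,f(x_N))$ is \emph{positively} oriented: in the increasing case $f$ preserves orientation, and in the decreasing case $f$ reverses the negative orientation of $\mathbf x$. Reading this positive cyclic order, which is $f(x_1)=1,\ f(x_2),\dots,f(x_{N-2}),\ f(x_{N-1})=\infty,\ f(x_N)=0$, I note that the points $f(x_2),\dots,f(x_{N-2})$ all lie on the positively oriented arc from $1$ to $\infty$ that avoids $0$, i.e. on the real interval $(1,\infty)$, and that they are met there in increasing order. This yields at once $1<f(x_2)<\cdots<f(x_{N-2})$, that is, $t_k$ is strictly increasing with every term greater than $1$. (Here $N\geq4$ is exactly what makes the index range $k=2,\dots,N-2$ nonempty.)

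The computational heart can also be checked directly once one reduces, by the homography invariance of the cross-ratio, to the case where $(x_1,\dots,x_N)$ is genuinely increasing: then $x_1<x_k<x_{N-1}<x_N$, the factorisation $t_k-1=\dfrac{(x_1-x_k)(x_{N-1}-x_N)}{(x_1-x_N)(x_k-x_{N-1})}$ has numerator and denominator both positive, while $f(x)=\dfrac{x_1-x_{N-1}}{x_1-x_N}\cdot\dfrac{x-x_N}{x-x_{N-1}}$ is strictly increasing on $(x_1,x_{N-1})$ since $f'(x)$ has the sign of $x_N-x_{N-1}>0$. The step I expect to require the most care is the orientation bookkeeping of the third paragraph: one must argue cleanly that the cyclic order on $S^1$ forces the images $f(x_2),\dots,f(x_{N-2})$ into $(1,\infty)$ in increasing order, rather than wrapping past $\infty$ or $0$, and that the reduction to a strictly monotonous tuple is legitimate — both hinge on the fact, encoded in Lemma \ref{lemat51}, that $f$ acts on $\P^1_{\mathbb R}$ as an orientation-controlled homeomorphism.
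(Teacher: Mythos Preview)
Your proof is correct and follows essentially the same route as the paper: invoke Lemma~\ref{lemat51} for the homography $f(x)=(x_{N-1},x_N;x_1,x)$ and use the special values $f(x_1)=1$, $f(x_{N-1})=[\infty]$ (the paper also cites Remark~\ref{rem51} for the orientation bookkeeping). The paper's proof is a one-line sketch, while you have carefully spelled out the orientation argument in both the increasing and decreasing cases and added a direct computational verification; this extra detail is welcome but not a different approach.
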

\begin{proof}
The lemma follows from Lemma  \ref{lemat51}, $\quad $ Remark  \ref{rem51}
  and equalities \\
	$(x_{N-1}, x_N; x_1,x_1) =1,  (x_{N-1}, x_N; x_1,x_{N-1})=[\infty].$
\end{proof}
%\newpage
%\begin{rem}\label{remrr}
%Notice that once we glue $+\infty$ and $-\infty$ we can  in fact transform path integrals on $S^1.$ In the interval where the class $[\infty]$ is between the limits of integration  the path integral %${\int}_a^b$  (resp.  ${\int}_b^a$ ) is
%understood to be ${\int}_a^{+\infty}+ {\int}_{-\infty}^{b} $ ( resp.  ${\int}_b^{-\infty}+  {\int}_{+\infty}^{a} $) for positive  (resp. negative)  orientation of $S^1$ (cf. Fig.1). 
%\end{rem}

\section{Riemann canonical form for hyper-elliptic integrals.}

Let 
\beg A=\left[ \begin{array}{cc} a & b \\ c & d \end{array} \right] , \quad \psi _A (t)=\frac{at+b}{ct+d} .\eeg
Denote $N_A(t)=at+b$ and $D_A(t)=ct+d.$

\begin{dfn}\label{defa}
Define an operator $r_k(A)$ sending  a real function $f$ of one real variable $x$  to  a real function of one real variable $t$
by the following  formula
\be\label{rjeden}
(r_k(A)f)(t)=D_A ^k(t) f(\psi _A (t)).
\ee
%\eeg
\end{dfn}

The following proposition  describes basic properties of this operator 
\begin{prop}\label{op} Operator $r_k(A)$ enjoys the following properties
\begin{enumerate}
\item[1)]
 For any numbers $k,l$ and functions $f,g$ the following equality holds:
$$ \left(r_k(A)f \right) \left( r_l(A)g\right)=r_{k+l}(fg),$$
\item[1a)]
in particular taking $f\equiv 1$ in 1)  we get:
$$  (r_{k+l}(A) g)(t)=D_A ^k(t)(r_{l}(A) g)(t)$$
\item[2)]
 Operator $r_1(A)$ sends  a linear polynomial $P_1(x)=x-x_0$  to the following  linear in $t$ polynomial:
$$
\left( r_1(A)P_1\right)(t)=N(t)-D(t)x_0,
$$
\item[3)]
$$r_m(A){{\prod}_{i=0}^{m-1}(x-x_i)}={{\prod}_{i=0}^{m-1}}r_1(A)(x-x_i)={\prod}_{i=0}^{m-1}(N(t)-D(t)x_i),$$
\item[4)]
 If $f(x)=\sum _j c_jx^j$ is a polynomial of degree not bigger than $k$, the function $r_{k}(A) f (t)$ is also a polynomial and

$$ (r_{k}(A) f) (t)=\sum _j c_j N_A^j (t) D_A ^{k-j}(t) .$$
\end{enumerate}
\end{prop}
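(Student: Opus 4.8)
The plan is to derive every assertion from the single defining identity $\psi_A(t)=N_A(t)/D_A(t)$ together with the multiplicative behaviour of the weight $D_A^k(t)$. Property 1) is the engine of the whole proposition: once it is in hand, parts 1a) and 3) are purely formal consequences, while 2) and 4) are immediate substitutions. So I would prove 1) first and then harvest the rest.

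First I would establish 1) by expanding both sides through Definition \ref{defa}. Writing $(r_k(A)f)(t)\,(r_l(A)g)(t)=D_A^k(t)f(\psi_A(t))\cdot D_A^l(t)g(\psi_A(t))$ and collecting the powers of $D_A(t)$, the only observation needed is that $f(\psi_A(t))\,g(\psi_A(t))=(fg)(\psi_A(t))$, whence the product equals $D_A^{k+l}(t)(fg)(\psi_A(t))=(r_{k+l}(A)(fg))(t)$. Property 1a) is then the specialization $f\equiv 1$, using that $(r_k(A)1)(t)=D_A^k(t)$, since the constant function is unaffected by precomposition with $\psi_A$.

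For 2) I would substitute $P_1(x)=x-x_0$ into the definition with $k=1$, obtaining $(r_1(A)P_1)(t)=D_A(t)\bigl(\psi_A(t)-x_0\bigr)$, and then clear the denominator via $\psi_A(t)=N_A(t)/D_A(t)$ to reach $N_A(t)-D_A(t)x_0$. Part 3) combines the previous steps: the first equality follows by iterating 1) a total of $m-1$ times (a short induction on $m$), which converts the product of the factors $r_1(A)(x-x_i)$ into $r_m(A)$ applied to $\prod_{i=0}^{m-1}(x-x_i)$; the second equality is just 2) applied factor by factor.

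Finally, for 4) I would use that $r_k(A)$ is linear in its function argument (immediate from the definition), reduce to monomials, and compute $(r_k(A)x^j)(t)=D_A^k(t)\,\psi_A(t)^j=N_A^j(t)\,D_A^{k-j}(t)$; summing over $j$ against the coefficients $c_j$ gives the stated formula. The one point requiring care—and the closest thing to an obstacle here—is the claim that the output is genuinely a \emph{polynomial}: this is exactly where the hypothesis $\deg f\le k$ is used, as it guarantees $k-j\ge 0$ for every $j$ with $c_j\neq 0$, so no negative powers of $D_A(t)$ survive and the sum has polynomial entries.
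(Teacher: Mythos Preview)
Your proposal is correct and follows essentially the same approach as the paper: derive 1) directly from the definition, obtain 1a) by specialization, verify 2) by substitution, deduce 3) from 1) and 2), and use additivity to reduce 4) to monomials. The only cosmetic difference is that for 4) the paper invokes 1a) and 3) applied to $x^j$ (so $r_j(A)x^j=N_A^j(t)$ and then $r_k(A)x^j=D_A^{k-j}(t)N_A^j(t)$), whereas you compute $(r_k(A)x^j)(t)=D_A^k(t)\psi_A(t)^j$ directly; these amount to the same one-line calculation, and your explicit remark that $\deg f\le k$ is what keeps the exponents $k-j$ nonnegative is a helpful clarification the paper leaves implicit.
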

\begin{proof}
1) follows from definition \ref{defa}, 2) is a straightforward calculation, 3) follows from 2) and 1). Since $r_{k}(A)$ is by definition additive  4) follows from 1') and 3)  applied to $x^j, j=0,\dots , k.$
\end{proof}

Consider the following differential form: 
\be \label{omga} \omega _x= \frac{R(x) dx}{|\sqrt{P(x)}|} ,\ee
where $P(x)=a_N{\prod}_{i=1}^{N}(x-x_i)$, $N=2m$.
\begin{lem}\label{pb}
 The pullback of the form ${\omega}_x$ by the homographic map ${\psi}_A$ has the following form:
\beg \label{pullback1} ({\psi _A }^*\omega )_t=  \det A\,  ({\psi _A }^*R)(t) \frac{|D_{A}^{m-2}(t)|dt}{\sqrt{|(r_{2m}(A)P)(t)}|}.\eeg
\end{lem}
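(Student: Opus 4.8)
The plan is to compute the pullback directly from the change-of-variables formula for a $1$-form. Writing $x=\psi_A(t)$, any form $g(x)\,dx$ pulls back to $g(\psi_A(t))\,\psi_A'(t)\,dt$, so the only ingredients I need are the derivative $\psi_A'(t)$ and an expression for the factor $P(\psi_A(t))$ that sits under the square root.

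First I would compute the derivative. Since $\psi_A(t)=N_A(t)/D_A(t)$ with $N_A(t)=at+b$ and $D_A(t)=ct+d$, the quotient rule gives
\[
\psi_A'(t)=\frac{a\,D_A(t)-c\,N_A(t)}{D_A^2(t)}=\frac{ad-bc}{D_A^2(t)}=\frac{\det A}{D_A^2(t)}.
\]
Note that $D_A^2(t)>0$, so $\psi_A'$ has the sign of $\det A$; this is precisely why $\det A$ will appear in the final formula \emph{without} an absolute value.

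Next I would treat the factor $P(\psi_A(t))$. Because $P$ has degree $N=2m$, Proposition \ref{op}(4) applies with $k=2m$ and tells us that $r_{2m}(A)P$ is a genuine polynomial; by the very definition of $r_k(A)$ in Definition \ref{defa} we have $(r_{2m}(A)P)(t)=D_A^{2m}(t)\,P(\psi_A(t))$, whence
\[
P(\psi_A(t))=\frac{(r_{2m}(A)P)(t)}{D_A^{2m}(t)}.
\]
Taking absolute values and square roots, and using $\sqrt{|D_A^{2m}(t)|}=|D_A^m(t)|$, I obtain
\[
\bigl|\sqrt{P(\psi_A(t))}\,\bigr|=\frac{\sqrt{|(r_{2m}(A)P)(t)|}}{|D_A^m(t)|}.
\]

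Finally I would assemble the three pieces. Substituting into $\omega_x=R(x)\,dx/|\sqrt{P(x)}|$ gives
\[
(\psi_A^*\omega)_t=R(\psi_A(t))\cdot\frac{|D_A^m(t)|}{\sqrt{|(r_{2m}(A)P)(t)|}}\cdot\frac{\det A}{D_A^2(t)}\,dt,
\]
and since $D_A^2(t)=|D_A(t)|^2>0$ the factor $|D_A^m(t)|/D_A^2(t)$ collapses to $|D_A^{m-2}(t)|$. Recognizing $R(\psi_A(t))=(\psi_A^*R)(t)$ then yields exactly the asserted identity. The only real subtlety — and the step I would double-check most carefully — is the bookkeeping of absolute values against signed factors: the square root and the $|\cdot|$ on $P$ force $D_A$ to enter through $|D_A^{m-2}(t)|$, whereas the Jacobian contributes the signed factor $\det A$, so the two must be kept rigorously distinct.
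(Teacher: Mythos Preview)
Your proof is correct and follows exactly the approach of the paper, which simply notes that $P(\psi_A(t))=D_A^{-2m}(t)\,(r_{2m}(A)P)(t)$ from the definition of $r_k(A)$ and then says the formula follows by substitution. You have merely carried out that substitution in full detail, including the computation of $\psi_A'(t)=\det A/D_A^2(t)$ and the careful tracking of absolute values.
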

%Here $D_A(t):=ct+d$ is the denominator of $\psi _A$, and 
%$r_{k}(A) f (t)= D_A^k(t) f(\psi_A(t))$ or equivalently $r_{k}(A) f=D_A^k \cdot f \circ \psi_A$, for any function $f.$

\begin{proof}
Notice that by (\ref{rjeden}) we have $P(A(t))=D_A^{-2m}(r_{2m}(A)P)(t).$ The formula follows now by substitution.
\end{proof}

\begin{rem}
Notice  that for the case $P(\frac{a}{c})=0$, ${r_{2m}(A)P(t)}$ is a polynomial of a degree  $N-1$ where $N=2m$ is a degree of $P$.
\end{rem}
Now let $A$ be such that 
$\psi _A (\infty)=x_{N-1}, \psi _A (0)=x_N, \psi _A (1)=x_1.$  Of course $A$ is defined up to a scalar factor.
We can take  as $A$ the matrix given by the following equality:
\be  A=A_{x_{N-1},x_N,x_1}=\left[ \begin{array}{cc}  a&  b \\ c & d \end{array} \right] ,\ee
%\left[ \begin{array}{cc} ( x_{1}-x_{N})x_{N-1}&  -(x_1-x_{N-1})x_N\\ (x_1-x_N) & -(x_1-x_{N-1}) \end{array} \right] ,\ee
where
\be \label{macA} a=(x_1-x_N)x_{N-1}, b=-(x_1-x_{N-1})x_N, c=x_1-x_N, d=-(x_1-x_{N-1}).\ee

We have the following:
\begin{thm}\label{sub} Let  $P(x)=a_{N}{\prod}_{i=1}^{2m}(x-x_i),$ where $(x_1,\dots ,x_{2m})$ is a cyclically monotonous sequence, be a polynomial of degree $N=2m$ and let $A$ be as in (\ref{macA}).
Then
\beg (r_N(A)P) (t)=C t(1-t)(1-k_2t)\cdot \ldots \cdot (1-k_{N-2}t) , \quad 1>k_2>\ldots >k_{N-2}, \eeg
where 
the variable  $t$ is expressed by $x$ as the cross-ratio
\beg t=(x_{N-1}, x_N;x_1, x)  \eeg
and 
\[k_j=t_j^{-1},  \]
where
\be t_j=(x_{N-1}, x_N;x_1, x_j), \quad j=2, \ldots , N-2 \ee
are  roots of $ r_N(A)P$ different from $0$ and $1$. $C$ is here  a suitable constant cf.  (\ref{stala}).
\end{thm}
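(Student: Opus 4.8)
The plan is to reduce the whole computation to a product of $N$ linear factors by means of Proposition \ref{op}(3), and then to identify each factor with one of the claimed roots by recognizing the homography $\psi_A$ as the inverse of the cross-ratio map $x\mapsto(x_{N-1},x_N;x_1,x)$.

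First I would apply Proposition \ref{op}(3) to $P(x)=a_N\prod_{i=1}^{N}(x-x_i)$, obtaining
\[
(r_N(A)P)(t)=a_N\prod_{i=1}^{N}\bigl(N_A(t)-D_A(t)x_i\bigr).
\]
Writing each factor as $N_A(t)-D_A(t)x_i=D_A(t)\bigl(\psi_A(t)-x_i\bigr)$, I see that it vanishes exactly when $\psi_A(t)=x_i$, i.e.\ at $t=\psi_A^{-1}(x_i)$. The conceptual heart of the argument is that the cross-ratio map $x\mapsto(x_{N-1},x_N;x_1,x)$ \emph{is} the inverse of $\psi_A$: from (\ref{macA}) one checks $\psi_A(\infty)=x_{N-1}$, $\psi_A(0)=x_N$, $\psi_A(1)=x_1$, while the cross-ratio map sends these same three points to $\infty,0,1$; two homographies agreeing at three points coincide. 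Hence $\psi_A^{-1}(x_i)=(x_{N-1},x_N;x_1,x_i)$, which immediately reads off the roots: the factor for $i=N$ has root $t=0$, the factor for $i=1$ has root $t=1$, and for $j=2,\ldots,N-2$ the factor has root $t_j=(x_{N-1},x_N;x_1,x_j)$. Rewriting the $j$-th factor as a constant times $(t-t_j)=-t_j(1-k_jt)$ with $k_j=t_j^{-1}$ produces exactly the claimed shape $t(1-t)\prod_j(1-k_jt)$.

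The one factor needing separate care is $i=N-1$, where $\psi_A^{-1}(x_{N-1})=\infty$, so there is no finite root. Explicitly, substituting $a=c\,x_{N-1}$ and $b=d\,x_N$ from (\ref{macA}) gives $N_A(t)-D_A(t)x_i=c(x_{N-1}-x_i)t+d(x_N-x_i)$, so for $i=N-1$ the coefficient of $t$ vanishes and the factor collapses to the nonzero constant $d(x_N-x_{N-1})$. I would absorb this, together with the leading coefficients of all the other linear factors and with $a_N$, into the single constant $C$, whose explicit value is recorded in (\ref{stala}); distinctness of the $x_i$ (guaranteed by cyclic monotonicity) ensures $C\neq0$.

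Finally, the ordering $1>k_2>\cdots>k_{N-2}$ follows at once from Lemma \ref{lemat52}, which gives $1<t_2<\cdots<t_{N-2}$; inverting reverses the inequalities and places every $k_j$ in the open interval $(0,1)$. I do not expect a genuine obstacle here: the only real verification is that $\psi_A$ and the cross-ratio map are mutually inverse, which is a three-point check, and the rest is careful bookkeeping of the three distinguished indices $1$, $N-1$, $N$ against the factors $(1-t)$, the constant, and $t$ respectively.
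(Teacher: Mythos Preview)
Your argument is correct and follows essentially the same line as the paper's own proof: both use Proposition~\ref{op}(3) to factor $(r_N(A)P)(t)$ into $N$ linear factors, identify the roots of these factors with the values $\psi_A^{-1}(x_i)$, and invoke Lemma~\ref{lemat52} for the ordering $1>k_2>\cdots>k_{N-2}$. The only difference is presentational: the paper substitutes the explicit entries from (\ref{macA}) to write each factor as $(x_1-x_N)(x_{N-1}-x_i)t-(x_1-x_{N-1})(x_N-x_i)$ and reads off the root directly, whereas you argue conceptually that $\psi_A^{-1}$ coincides with the cross-ratio map by a three-point check (this is exactly Lemma~A.1(ii) in the Appendix, which you have in effect re-derived).
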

\begin{proof}
Indeed,  

%{\color{red}{$a_N$ JEST NIEZDEFINIOWANE}}

\beg P_A(t)=a_N[(at+b)-(ct+d)x_1]\cdot \ldots \cdot [(at+b)-(ct+d)x_N] ,\eeg
where $a,b,c,d$ are given by (\ref{macA}).
Hence 
\[ (at+b)-(ct+d)x_i=(x_1-x_N)(x_{N-1}-x_i)t-(x_1-x_{N-1})(x_N-x_i) \]
and $k_j=t_j^{-1}$ for $j=2,\dots ,2N-2.$
By Lemma \ref{lemat52} we obtain $1>k_2>\ldots >k_{N-2}.$

Also
\be\label{stala} C=a_N(x_N-x_1)(x_N-x_{N-1}) \prod_{j=1}^{N-1} (x_{N-1}-x_1)(x_N-x_j) .\ee

\end{proof}

\begin{lem}\label{corR}
Let $\omega _x$ be as in (\ref{omga}) and $A$ - as in (\ref{macA}). Then 
\be \label{kanon} {\psi _A }^*\omega _t=  \frac{ ({\psi _A }^*R)(t) |D^{m-2}(t)| \epsilon}{\sqrt{|a_N(x_{N-1}-x_1)^{N-3}\prod_{j=2}^{N-2}(x_N-x_j)}|}  \,\,\times \ee
\[   \frac{dt}{\sqrt{t(1-t)(1-k_2t)\cdot \ldots \cdot (1-k_{N-2}t)}},\]
where 
$ \label{znak} \epsilon =\sgn \det A . $
\end{lem}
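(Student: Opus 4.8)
The plan is to derive (\ref{kanon}) by feeding the explicit factorization of $r_N(A)P$ from Theorem \ref{sub} into the pullback formula of Lemma \ref{pb}; no new geometric input is required, so the whole statement reduces to simplifying a single scalar coefficient. First I would quote Lemma \ref{pb} in the form
\[ ({\psi_A}^*\omega)_t = \det A\,({\psi_A}^*R)(t)\,\frac{|D_A^{m-2}(t)|\,dt}{\sqrt{|(r_{2m}(A)P)(t)|}}, \]
with $N=2m$, so that it only remains to rewrite the factor $\det A/\sqrt{|(r_N(A)P)(t)|}$ in the asserted shape. The quantities $({\psi_A}^*R)(t)$ and $|D_A^{m-2}(t)|$ are then carried along unchanged.

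Next I would substitute $(r_N(A)P)(t)=C\,t(1-t)(1-k_2t)\cdots(1-k_{N-2}t)$ from Theorem \ref{sub} and pull the constant $\sqrt{|C|}$ out of the radical. This leaves the canonical radicand $\sqrt{|t(1-t)(1-k_2t)\cdots(1-k_{N-2}t)|}$ in the denominator together with a scalar prefactor $\det A/\sqrt{|C|}$. Splitting off the sign by writing $\det A=\epsilon\,|\det A|$ with $\epsilon=\sgn\det A$ then produces precisely the factor $\epsilon$ appearing in (\ref{kanon}), and reduces everything to the positive constant $|\det A|/\sqrt{|C|}$.

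The heart of the proof, and the step I expect to require the most care, is showing that $|\det A|/\sqrt{|C|}$ equals $1/\sqrt{|a_N(x_{N-1}-x_1)^{N-3}\prod_{j=2}^{N-2}(x_N-x_j)|}$, equivalently that $C=(\det A)^2\,a_N(x_{N-1}-x_1)^{N-3}\prod_{j=2}^{N-2}(x_N-x_j)$. For this I would revisit the factorization in the proof of Theorem \ref{sub}: the factors coming from the three distinguished roots $x_1,x_{N-1},x_N$ produce $t$, $1-t$, and the constant responsible for the degree drop at the pole of $\psi_A$, whose leading/constant coefficients multiply to exactly $\alpha^2\beta^2(\alpha-\beta)^2=(\det A)^2$, where $\alpha=x_1-x_N$ and $\beta=x_1-x_{N-1}$; meanwhile each of the remaining $N-3$ factors, once normalized to the form $1-k_jt$, contributes the constant $(x_{N-1}-x_1)(x_N-x_j)$, so their product supplies precisely $(x_{N-1}-x_1)^{N-3}\prod_{j=2}^{N-2}(x_N-x_j)$. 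Multiplying by the leading coefficient $a_N$ gives the value of $C$ recorded in (\ref{stala}). The determinant I would evaluate once from (\ref{macA}), obtaining $\det A=(x_1-x_N)(x_1-x_{N-1})(x_N-x_{N-1})$, which is consistent with $(\det A)^2=\alpha^2\beta^2(\alpha-\beta)^2$ and confirms that the extracted sign is exactly $\sgn\det A$. The only genuine subtlety is this bookkeeping of exponents and signs, after which assembling the pieces yields (\ref{kanon}).
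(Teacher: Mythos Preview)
Your proposal is correct and follows exactly the route the paper takes: quote Lemma~\ref{pb}, feed in the factorization from Theorem~\ref{sub}, and reduce everything to identifying the scalar $|\det A|/\sqrt{|C|}$ via the explicit determinant $\det A=(x_N-x_1)(x_N-x_{N-1})(x_{N-1}-x_1)$. The paper compresses all of this into two sentences, whereas you spell out the factor-by-factor bookkeeping for $C$; your version is in fact more informative, since the displayed formula (\ref{stala}) for $C$ in the paper has a typographical slip in the product range, and your direct recomputation bypasses it.
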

\begin{proof}
We have the following equality:
\[ \det A=(x_N-x_1)(x_N-x_{N-1})(x_{N-1}-x_1) .\]
Now use Theorem \ref{sub} and Lemma \ref{pb}. 
\end{proof}

The last lemma one can read as such a formula:
\be   \label{canonint} \int\frac{R(x) dx}{\sqrt{P(x)}}=
\frac{ \epsilon  }{\sqrt{|a_N(x_{N-1}-x_1)^{N-3}\prod_{j=2}^{N-2}(x_N-x_j)}|}  \,\,\times \ee
\[  \int \frac{({\psi _A }^*R)(t) |D^{m-2}(t)|dt}{\sqrt{t(1-t)(1-k_2t)\cdot \ldots \cdot (1-k_{N-2}t)}} .\]

{\bf Remark.}
{\it Observe that $\epsilon=1$  (resp. $\epsilon=-1$) for $(x_1, \ldots  ,x_{N-1}, x_N)$ cyclically increasing (resp. cyclically decreasing).}

We finish this section with the following, easy to prove lemma:

\begin{lem}
Let  $P(t)=t(1-t)(1-k_2t)\dots (1-k_{n-2}t)= {\sum}_{i=1}^n a_it^i.$ Then 
\beg
a_i=(-1)^{i-1}{\sigma}_{i-1}(1,k_2,\dots ,k_{n-2}),
\eeg
where $$ {\sigma}_i(u_1,\dots , u_n)={\sum}_{1\leq s_1<s_2<\dots <s_i \leq n} u_{s_1}\dots u_{s_i}$$
is an elementary symmetric polynomial of degree $i$, for $i>0$ and ${\sigma}_0(u_1,\dots, u_n):=1.$
\end{lem}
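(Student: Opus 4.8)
The plan is to read off the coefficients $a_i$ directly from the fully factored form of $P(t)$, using nothing more than the ``choose one term from each factor'' expansion of a product of binomials, which reproduces the combinatorial definition of the elementary symmetric polynomials. First I would put the factors on equal footing by writing $v_1 := 1$ and $v_j := k_j$ for $2 \le j \le n-2$, so that $P(t) = t\prod_{j=1}^{n-2}(1 - v_j t)$. There are exactly $n-2$ linear factors besides the leading $t$ (the factor $(1-t)$ together with the $n-3$ factors $(1-k_j t)$ for $j=2,\dots,n-2$), so $P$ has degree $n-1$.

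Next I would expand $\prod_{j=1}^{n-2}(1 - v_j t)$. Selecting the term $-v_j t$ from precisely the factors indexed by an $m$-element subset $S \subseteq \{1, \dots, n-2\}$, and $1$ from the remaining factors, contributes $(-1)^m \prod_{j\in S} v_j$ to the coefficient of $t^m$; summing over all such $S$ is, by definition, $(-1)^m \sigma_m(v_1,\dots,v_{n-2})$. Hence
\[
\prod_{j=1}^{n-2}(1 - v_j t)=\sum_{m=0}^{n-2}(-1)^m \sigma_m(1,k_2,\dots,k_{n-2})\,t^m,
\]
and multiplying by the leading factor $t$ and substituting $i = m+1$ turns the coefficient of $t^m$ into the coefficient $a_i$ of $t^i$, yielding $a_i = (-1)^{i-1}\sigma_{i-1}(1,k_2,\dots,k_{n-2})$ as asserted. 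The convention $\sigma_0 := 1$ handles $i = 1$, while $\sigma_{i-1} = 0$ for $i - 1 > n-2$ forces $a_n = 0$.

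There is essentially no obstacle here: the argument is pure bookkeeping. The only points requiring care are the index shift created by the leading factor $t$ (which sends $\sigma_m$ to $a_{m+1}$) and the observation that the upper summation limit $n$ in the statement exceeds the true degree $n-1$ by one, so that the top coefficient $a_n$ vanishes. If one prefers to avoid the combinatorial expansion, the identity for $\prod_{j}(1-v_j t)$ follows equally well by an immediate induction on the number of factors, using the Pascal-type recursion $\sigma_m(v_1,\dots,v_{r}) = \sigma_m(v_1,\dots,v_{r-1}) + v_r\,\sigma_{m-1}(v_1,\dots,v_{r-1})$, which is exactly what results from multiplying the $(r-1)$-factor product by $(1-v_r t)$.
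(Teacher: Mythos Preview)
Your proof is correct: the expansion of $\prod_j(1-v_j t)$ via the elementary symmetric polynomials, followed by the index shift from the leading factor $t$, is exactly the routine computation the lemma calls for, and you handle the edge case $a_n=0$ properly. The paper itself gives no argument beyond declaring the lemma ``easy to prove'' and appending a \qed, so there is nothing further to compare against.
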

\qed
%\vspace{1cm}

%\newpage

\section{Deriving some formulas}

\subsection{Elliptic case}
In the case when the degree of the polynomial $Q_4:=P$ is  $4$: 
\[ Q_4=a_4\prod_{i=1}^4 (x-x_i) ,\]
the formula  (\ref{canonint}) can be simplified
We assume that the sequence of roots $(x_1,x_2,x_4,x_5)$ is cyclically monotonous. 
\beg \label{kanon4} \int \frac{R(x) dx}{\sqrt{|Q_4(x)}|}= \frac{1}{\sqrt{|a_4(x_{3}-x_1)(x_{4}-x_2)|}} 
 \int \frac{ R\left(\frac{at+b}{cx+d} \right) dt}{\sqrt{|t(1-t)(1-kt)|}}. \eeg
Here the substitution $x=\phi  (t)=\frac{at+b}{cx+d}$ is defined by $\phi (1)=x_1, \phi ([\infty])=x_3, \phi (0)=x_4$, 
hence coefficients $a$, $b$, $c$, $d$ can be given by (\ref{macA}) for $N=4$.
 It is easy to see that the quantity under the square root in the denominator in the expression in front of the integral
$ |a_4(x_{3}-x_1)(x_{4}-x_2)|= |a_4(a_{3}-a_1)(a_{4}-a_2)|$ 
 is    invariant  under both  $\tau_k$ and $\eta_k$.

Let us consider the following three particular cases:
\[ R(x)\equiv 1, \quad R(x)=x, \quad R(x)=\frac{1}{x-p}. \]

 In the sequel,  we  use properties concerning homographic transformations, described in the Appendix.
%\be \label{ptozs}
%\frac{at+b}{ct+d}=\frac{a}{c}-\frac{ad-bc}{c^2}\frac{1}{t+\frac{d}{c}} \ee

For the exponent  $-1$ we will use the following integral:
\beg P(t,h,k)=\int\frac{dt}{(1-ht)\sqrt{t(1-t)(1-kt)}}, \quad   P(t,h,k)=-\frac{1}{h}I_{-1,\frac{1}{h}}(t,k)\eeg
where
\beg I_{-1,s}(t,k)=\int\frac{dt}{(t-s)\sqrt{t(1-t)(1-kt)}}.\eeg
%Depending on the chosen form we use the decomposition
% (\ref{ptozs}) or (\ref{hom4}).

%\newpage

Using formulas from the Appendix we obtain
\be\label{pirsza} \int \frac{ dx}{\sqrt{|Q_4(x)}|}= \frac{\epsilon}{\sqrt{|a_4(x_{3}-x_1)(x_{4}-x_2)|}} I_0(t,k) ,\ee

Further, in terms of  $P$  we obtain the following formulas:

\be \label{xdx1} \int \frac{ xdx}{\sqrt{|Q_4(x)}|}
% \frac{1}{\sqrt{|a_4(x_{3}-x_1)(x_{4}-x_2)|}} \ee 
%\[\left(\frac{a}{c}I_0(t,k)+\left(\frac{b}{d}-\frac{a}{c}\right)P\left(-\frac{a-cp}{b-dp},t,k\right)\right) =\]
 =\frac{\epsilon}{\sqrt{|a_4(x_{3}-x_1)(x_{4}-x_2)|}} \left(x_3 I_0(t,k)+(x_4-x_3)
P\left(t,h,k\right)\right),\ee

\be \label{last} \int \frac{ dx}{(x-p)\sqrt{|Q_4(x)}|}=\frac{\epsilon}{\sqrt{|a_4(x_{3}-x_1)(x_{4}-x_2)|}(x_3-p)(x_4-p)}  \ee
%\[= \frac{1}{\sqrt{|a_4(x_{3}-x_1)(x_{4}-x_2)|}} \left( {\frac{c}{a-cp}}I_0(t,k)-\left(\frac{a}{b-pd}-{\frac{c}{a-cp}}\right)P\left(-\frac{a-cp}{b-dp},t,k\right)\right) =\]
\[ \left((x_4-p) I_0(t,k)-(x_4-x_3)P\left(t,h_p,k\right)\right).\]

 \medskip

In the formulas (\ref{pirsza})-(\ref{last}) we put

\be \label{teka} t=(x_{3}, x_4;x_1, x) , \quad k=(x_{3}, x_4;x_1, x_2)^{-1},\ee
\be \label{hhp} \quad h=\frac{x_4-x_1}{x_3-x_1}, \quad h_p=(x_3,x_4;x_1,p)^{-1}. \ee

The formulas (\ref{pirsza})-(\ref{last}) can be transformed by the operations $\tau _i, \eta _i$, $i=1,2,3,4$ which preserve the relative position of roots on $S^1.$ Thus we see that the  dihedral group  $D_4$ acts freely on 
these integral formulas. More precisely, every formula of  
(\ref{pirsza})-(\ref{last}) determines a regular orbit of this group action. %\newpage

Now we illustrate the action of the group $D_4$ on the basis of  elliptic  integrals we have chosen.

\subsection{ Definite elliptic integrals.}

  Definite integrals of the forms $\frac{dx}{\sqrt{G_{4}(x)}}$, $\frac{xdx}{\sqrt{G_{4}(x)}}$ and  $\frac{dx}{(x-p)\sqrt{G_{4}(x})}$ can be written by means of indefinite ones, which were calculated in the previous subsection.

Recall that elliptic integrals of first and third kinds are defined by the following formulas:

%\begin{equation}\label{A1}
%{\int}_{u}^d \frac{dx}{\sqrt{(a-x)(b-x)(c-x)(d-x)}}=\frac{2}{\sqrt{(a-c)(b-d)}}F(\alpha, q)
%\end{equation}
%$$ \qquad a > b > c > d > u $$

\be \label{fk}
F(\phi,l)={\int}_0^{\phi}\frac{d\alpha}{\sqrt{1-l^2{\sin}^2\alpha}}
=\frac{1}{2}{\int}_0^{{\,\sin^2}{\phi}}\frac{dt}{\sqrt{t(1-t)(1-l^2t)}}.\ee

\be \label{tk}
\Pi (\phi, h, l)={\int}_0^{\phi}\frac{d\alpha}{(1-h\sin^2 \alpha)\sqrt{1-l^2{\sin}^2\alpha}} \ee
\[=\frac{1}{2}{\int}_0^{{\,\sin^2}{\phi}}\frac{dt}{(1-ht)\sqrt{t(1-t)(1-l^2t)}}.\]

%\newpage
The roots  $(x_1,x_2,x_3,x_4)$ divide the projective line ${\mathbb P}_{\mathbb R} ^1$ into four arcs $L(x_1,x_2)$, $L(x_2,x_3)$, $L(x_3,x_4)$ and $L(x_4,x_1)$ (Fig.1. for $N=4$),  positively or negatively oriented  depending  on the type of cyclic monotonicity  of the sequence  $(x_1,x_2,x_3,x_4)$. For any  $u\in L(x_4,x_1)$   we also distinguish an oriented sub-arc $L(x_4,u)$. 
Naturally,
$ \int_{L(x_4,u)} =\int_{x_4}^u$ if  $[\infty] \notin L(x_4,u)$. In the case where  $[\infty] \in L(x_4,u)$ we have   $\int_{L(x_4,u)} =\int_{x_4}^\infty +\int_{-\infty}^u$  if the arc has positive orientation,  and $\int_{L(x_4,u)} =\int_{x_4}^{-\infty} +\int_{\infty}^u$ if it is negatively oriented (cf. Fig.1).

In our notation the  equations  (\ref{pirsza} )-(\ref{last}) lead to:
\begin{equation}\label{firstkind1}
{\int}_{L(x_4,u)} \frac{dx}{\sqrt{Q_4(x)}}=\frac{2\epsilon}{\sqrt{|a_4(x_4-x_2)(x_3-x_1)|}}F(\nu, q),
%\frac{2}{\sqrt{(a-c)(b-d)}}\{ (a-b){\Pi}(\nu ,\frac{a-d}{b-d},q)
\end{equation}
%$$ \qquad  u> x_4 >   x_3 >   x_2> x_1,$$

\be \label{xdxdef} \int_{L(x_4,u)} \frac{ xdx}{\sqrt{|Q_4(x)}|}
% \frac{1}{\sqrt{|a_4(x_{3}-x_1)(x_{4}-x_2)|}} \ee 
%\[\left(\frac{a}{c}I_0(t,k)+\left(\frac{b}{d}-\frac{a}{c}\right)P\left(-\frac{a-cp}{b-dp},t,k\right)\right) =\]
 =\frac{2\epsilon}{\sqrt{|a_4(x_{3}-x_1)(x_{4}-x_2)|}} \left(x_3 F(\nu, q)+(x_4-x_3) \Pi(\nu,h, q) \right),\ee

\be \label{lastdef} \int_{L(x_4,u)} \frac{ dx}{(x-p)\sqrt{|Q_4(x)}|}=\frac{2\epsilon}{\sqrt{|a_4(x_{3}-x_1)(x_{4}-x_2)|}(x_3-p)(x_4-p)}  \ee
%\[= \frac{1}{\sqrt{|a_4(x_{3}-x_1)(x_{4}-x_2)|}} \left( {\frac{c}{a-cp}}I_0(t,k)-\left(\frac{a}{b-pd}-{\frac{c}{a-cp}}\right)P\left(-\frac{a-cp}{b-dp},t,k\right)\right) =\]
\[ \left((x_4-p) F(\nu, q)-(x_4-x_3)\Pi(\nu,h, q)\right),\]
where
\begin{equation}\label{A9}
%q=\sqrt{\frac{(b-c)(a-d)}{(a-c)(b-d)}}  \qquad r= \sqrt{\frac{(a-b)(c-d)}{(a-c)(b-d)}}
\nu= {\arcsin} \sqrt{(x_3, x_4; x_1, u)}, \qquad   \qquad q= \sqrt{(x_3, x_4 ; x_1, x_2)^{-1}}
\end{equation}
(cf. \ref{teka}). Both $h$ and $ h_p$ are given by (\ref{hhp}).
%Notice that substituting $t=x^2$ into $I_{0}(t,k)$ and taking into account Theorem \ref{sub} one obtains 

Consider the formulas (1)-(8)  from the sections 3.147, 3.148, 3.151  of  \cite{gr00}. 
We obtain the cases (8) in the formulas
 3.147, 3.148, 3.151 by taking  $x_1=d$, $x_2=c$, $x_3=b$, $x_4=a$ in  (\ref{firstkind1})-(\ref{lastdef}). 
One readily verifies that applying  ${\tau}_i $ and ${\eta}_i$ for $ i=1,\dots 4$ to (\ref{firstkind1})-(\ref{lastdef}) one obtains all of the formulas  of 3.147, 3.148 and 3.151.
( the orbit of any integral  under the of $D_4$-action  yield all the formulas in the corresponding  section).

In the formulas  discussed above we have not  used recursive formulas.  The expression of an elliptic integral  as a combination of basic integrals was connected with  both the properties  of homographic transformations, described in the Appendix, and the change of  variable.
%{\it W omawianych wzorach nie korzystalismy z formul rekurencyjnych, rozklad na calki bazowe jest zwiazany z wlasnosciami funkcji homograficznych omowionych w Appendixie, oraz z zamiana zmiennej}

Notice that analogously,  the action of $D_{4}$ 
can be used for general,  much more complicated, elliptic integrals i.e. those which require recurrence.
More generally, in order
 to obtain fewer formulas, one can use the described above  $D_N$-action  for a hyper-elliptic  case.

%{\it i ogolniej dzialania $D_N$ dla hipereliptycznego przypadku.
We do not focus on a recurrence for the elliptic case.  Our approach in this  case is more efficient than the usual recursive procedures that can be found in the literature cf. 
 \cite{bf71}, \cite{ps97}.  In the next subsection we give two examples concerning hyper-elliptic integrals which  show  advantages of our approach.

\subsection{Examples of computation of hyper-elliptic integrals.}

We end this section with two examples. 
In Section 6 we have showed how in general hyper-elliptic case one transforms the integral involving the polynomial $Q$ to the Riemann canonical form. Therefore we will not do it here,
but we show how to express an integral as a  linear combination of basic integrals.
 In the first  example, we compute the  hyper-elliptic integral with a polynomial factor of  higher degree. In the second example we consider 
a hyper-elliptic integral with a rational factor in the Riemann form. 
In the first  example, we compute the  hyper-elliptic integral with a polynomial factor of  higher degree, In the second example we consider 
a hyper-elliptic integral with a rational factor in the Riemann form. 
We included both examples to illustrate how efficient and easy to apply is our approach cf. Remark \ref{lastr}.

\begin{example}\label{ex73}
Let us compute the integral
 \be\label{Rform} \int \frac{x^9 dx}{\sqrt{Q(x)}}, \qquad {\mathrm{where}} \qquad
 Q(x)=\sum_{j=0}^7 a_j x^j. \ee
  Notice that under the assumption that all roots of a polynomial $Q(x)$ are real and distinct one can use a linear transformation for  obtaining the Riemann canonical form of  (\ref{Rform}).
 We leave the justification of this to the reader.
 Thus we have $n=9$ and $M=7.$ However, in the formulas below i.e., (\ref{w79})-(\ref{p1})  we keep writing $M$ to make clear how they were derived. 
According to formula \eqref{komb11}
 we obtain
\be\label{p79}
 \int \frac{x^9 dx}{\sqrt{Q(x)}}=\sum_{l=0}^{5}B_{l,9}I_l+2\sum_{l=6}^9 B_{l,9} x^{l-6}\sqrt{Q(x)}+C. \ee

We have to find the elements of the $9$-th column of the matrix $B.$ We start with the diagonal term and move inductively up. 
According to  \eqref{formB} the elements $B_{9,9}, \ldots , B_{6,9}$ are the terms of the matrix $D^{-1}$, whereas the remaining terms $B_{l,9}, \; l=0, \ldots 5$ are the elements of the matrix $-CD^{-1}$.
We have
\be \label{w79} B_{9,9}=\frac{1}{A_{9,9}}=\frac{1}{D_{9,9}}, \quad B_{9,9}=\frac{1}{(M+6)a_M} ,\ee
and further recursively
$ B_{k,9}=\frac{-1}{D_{k,k}}\sum_{l=k+1}^9 D_{k,l}B_{l,9} ,$ for $ 5<k<9. $
Thus we obtain the following formulas:
\be \label{p1} B_{8,9}=-\frac{(M+5) a_{M-1}}{(M+4)(M+6){a_M}^2} ,\ee
\beg B_{7,9}= \frac{1}{(M+2)(M+4)(M+6){a_M}^3} \left[(M+3)(M+5){a_{M-1}}^2-(M+4)^2a_{M-2}a_M \right] \eeg
\beg \label{k1} B_{6,9}= \frac{-1}{M(M+2)(M+4)(M+6){a_M}^4}\left\{(M+1)(M+3)(M+5) {a_{M-1}}^3  -  \right. \eeg
\[ - \left[ (M+1)(M+4)^2+(M+2)^2(M+5)\right]a_{M-2}a_{M-1}a_M+ \]
\[ \left. + (M+2)(M+3) (M+4) a_{M-3}{a_M}^2 \right\} . \]
and
{\scriptsize 
\be\label{p2} \left[\begin{array}{c}  B_{0,9} \\ B_{1,9} \\ B_{2,9} \\ B_{3,9}\\ B_{4,9} \\B_{5,9}  \end{array} \right]= -
\left[\begin{array}{cccc}  
 a_1 & 2 a_0 & 0 & 0 \\
2a_2 & 3a_1  & 4a_0 & 0 \\
3a_3 & 4a_2 & 5a_1 & 6a_0 \\
4a_4 & 5a_3 & 6a_2 & 7a_1 \\
5a_5 & 6a_4 & 7a_3 & 8a_2 \\
6a_6 & 7a_5 & 8a_4 & 9a_3
\end{array} \right] 
\left[\begin{array}{c}  B_{6,9} \\ B_{7,9} \\ B_{8,9} \\ B_{9,9}  \end{array} \right].
\ee 
}
Inserting (\ref{w79}),(\ref{p1}) and (\ref{p2}) to (\ref{p79}) we obtain an explicit form of the desired integral.
\end{example}
\begin{ex}\label{ex74}
Consider the integral
\beg
\int\frac{(t-\frac{3}{2})^{-3}\,\,dt}{\sqrt{t(1-t)(1-\frac{1}{4}t)(1-\frac{1}{3}t)(1-\frac{1}{2}t)}}.
\eeg
Then 
$
P(t)={\sum}_{i=0}^5a_it^ i= {\sum}_{i=0}^5b_i(t-\frac{3}{2})^i,
$
where
$a_0=0,\, a_1=1,\, a_2=-\frac{25}{12},\, a_3=\frac{35}{24}, \,a_4=-\frac{5}{12},\, a_5=\frac{1}{24}, $
$ b_0=-\frac{15}{256}, \, b_1=\frac{3}{128}, \, b_2=-\frac{25}{98},\, b_3=-\frac{5}{48},\, b_4=-\frac{5}{48},\, b_5=\frac{1}{24}.$
\end{ex}
 The  subspace $V_{-2}\subset V$ generated by $(x-\frac{3}{2})^n, \quad  n=3,2,,\dots ,-1,-2,-3$  yields the corresponding invariant subspace of integrals
 $I_3, I_2,,\dots I_0, I_{-1,\frac{3}{2}},  I_{-2,\frac{3}{2}}, I_{-3,\frac{3}{2}}.$ Therefore instead of infinite matrices $Y$ and $W$ we can take their cuts (denoted by the same letters for simplicity):
 
{\scriptsize
 \beg
  W  = \left[
	\begin{array}{ccc}
	-2b_0        &     -3b_1        \\
	\\
 0   &    -4b_0   
\end{array}\right]
	=
	 \left[
	\begin{array}{ccc}
\frac{15}{128}       &     -\frac{9}{128}        \\
\\
	0   &    \frac{15}{64}    
\end{array}\right] \qquad
  W^{-1}  = \
	 \left[
	\begin{array}{ccc}
\frac{128}{15}       &     \frac{64}{25}        \\
\\
	0   &    \frac{64}{15}    
\end{array}\right],
\eeg 
}

{\scriptsize 
\beg
 Y  = \left[
	\begin{array}{ccc}
	3b_5        &     0         \\
	\\
	2b_{4}    &    b_5   \\
	\\
     b_{3}    &   0b_4        \\
     \\
	0b_2 &-b_3   \\
	\\
-b_1 &-2b_2    
\end{array}\right]
	=
	 \left[
	\begin{array}{ccc}
\frac{3}{24}       &     0         \\
\\
	-\frac{10}{48}    &    \frac{1}{24}  \\
	\\
     -\frac{5}{48}    &   0       \\
     \\
	0 &\frac{5}{48}   \\
	\\
-\frac{3}{128} &-\frac{25}{48}    
\end{array}\right] ,\quad
 \qquad
- YW^{-1}  = 
	 \left[
	\begin{array}{ccc}
-\frac{16}{15}       &     -\frac{8}{25}         \\
\\
	\frac{16}{9}    &    \frac{16}{45}  \\
	\\
     \frac{8}{9}    &   \frac{4}{15}      \\
     \\
	0 &-\frac{4}{9}   \\
	\\
\frac{1}{5} &\frac{1027}{450}    
\end{array}\right], \eeg
}
Thus   in the following, derived from (\ref{komb22}), formula:

\medskip

\begin{multline}\label{exmpl}
\int\frac{(t-\frac{3}{2})^{-3}\,\,dt}{\sqrt{t(1-t)(1-\frac{1}{4}t)(1-\frac{1}{3}t)(1-\frac{1}{2}t)}}=\\
{\sum}_{l=-1}^{3} U_ {l,-3}\int\frac{(t-\frac{3}{2})^{l}\,\,dt}{\sqrt{t(1-t)(1-\frac{1}{4}t)(1-\frac{1}{3}t)(1-\frac{1}{2}t)}} \\
+{{2}}{\sum}_{l=-3}^{-2}U_{l,-3}\,(t-\frac{3}{2})^{{l+1}}\,\,\,{{\sqrt{t(1-t)(1-\frac{1}{4}t)(1-\frac{1}{3}t)(1-\frac{1}{2}t)}}}+C. 
\end{multline}
we should put $U_{3,-3}=  -\frac{8}{25} ,  U_{2,-3}= \frac{16}{45},    U_{1,-3}= \frac{4}{15},   U_{0,-3}= \frac{4}{9},   U_{-1,-3}= \frac{1027}{458},  U_{-2,-3}= \frac{64}{25},  U_{-3,-3}= \frac{64}{15}.$
Further, the formula (\ref{newton}) enables one to replace the integrals $I_{l,p}$ for positive indices $l$ by appropriate combinations of the basic  integrals $I_l.$
%{\it Dalej formula (\ref{newton}) daje mozliwosc zastapienia calek $I_{l,p}$ kombinacjami calek bazowych $I_l$ dla l dodatnich.}

\begin{rem}\label{lastr}
Notice that  one does not need to invert whole matrix $W$. It is enough to compute an appropriate column of $W^{-1}.$ This is  important in numerical calculations involving computations of many hyper-elliptic integrals $I_{-n,p}$ for 
big $n.$ 
\end{rem}

{
 \begin{rem}
Let 
\begin{equation}\label{Lur1}
F_D^{(n)}\left(\begin{matrix}a,&b_1,\dots , b_n \\  & c  \end{matrix}\mid x_1,\dots , x_n \right) = {\sum}_{i_1,\dots , i_n = 1}^{\infty}\frac{(a)_{i_1+\dots +i_n} {{(b_{1})}_{i_1}}\dots  {({b_{n})}_{i_n}}}{(c)_{i_1+\dots +i_n}\cdot {i_1}!\cdot \dots  {i_n}!}x_1^{i_1}\dots
\, x_n^{i_n},
\end{equation}
where  $(s)_{i}$ is a Pochhammer symbol,
denote the Lauricella function od type $D$  in $n$ variables. Then  for  and   $c>a>0$
\begin{equation}\label{Lur2}
F_D^{(n)}\left(\begin{matrix}a,& b_1, \dots , b_n\\  & c \end{matrix} \mid x_1,\dots , x_n\right) = K{\int}_{0}^{1} t^{a-1}(1-t)^{c-a-1}(1-x_1t_1)^{-b_1}\dots (1-x_nt_n)^{-b_n}dt
\end{equation}
where $K=\frac{\Gamma (c)}{{\Gamma (a)} {\Gamma (c-a)}} .$
%The  integral representation ({\ref{Lur2}}) can be checked by the Taylor expansion of the right hand side of (\ref{Lur2}) and term by term integration.
Taking the definite integral in (\ref{exmpl}) over the interval $[0,1]$ we obtain the following equality for the special values of Lauricella functions:
\begin{equation*}
F_D^{(4)}\left(\begin{matrix}\frac{1}{2}, &3,\frac{1}{2}, \frac{1}{2}, \frac{1}{2} \\& 1 \end{matrix} \mid \frac{2}{3},  \frac{1}{4},  \frac{1}{3},  \frac{1}{2}\right)= {\sum}_{l=-1}^3 \left({\frac{-3}{2}}\right)^{l+3}U_{l,-3}F_D^{(4)}\left( \begin{matrix}\frac{1}{2}, &-l,\frac{1}{2}, \frac{1}{2}, \frac{1}{2}&\\ &1 \end{matrix}\mid \frac{2}{3},  \frac{1}{4},  \frac{1}{3},  \frac{1}{2}\right).
\end{equation*}
Notice also that formulas (\ref{komb11}) and (\ref{komb22}) lead to many,  analogous to the above,  formulas for special values of Lauricella functions.
\end{rem}
}

\appendix

\section{}
In this appendix, for convenience of the reader, we collect some elementary facts concerning real homographic transformations.
Let 
\be \label{hom1}
{\phi}(t)=\frac{at+b}{ct+d}\,\,,    a,b,c,d  \in {\mathbb R}
\ee
be a homographic transformation of ${\mathbb P}^1_{\mathbb R}.$
\begin{lem}\label{hom30} 
\begin{itemize}
\item[i)] Any homographic transformation (\ref{hom1}) of ${\mathbb P}^1_{\mathbb R}$ can be written in the following form:
\beg\label{hom2}
{\phi}(t)= {\phi}(\infty) + {\frac{{\phi}(0)-{\phi}(\infty)}{1-\frac{t}{{\phi}^{-1}(\infty)}}},
\eeg
\item[ii)] the inverse homographic transformation is given by the following cross-ratio:
\be\label{hom3}
t={\phi}^{-1}(x)=({\phi}(\infty), {\phi}(0);{\phi}(1), x).
\ee

\end{itemize}
\end{lem}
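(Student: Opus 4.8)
The plan is to prove both parts by reducing to the explicit coefficients $a,b,c,d$ of the homography (\ref{hom1}), handling the generic case $c\neq 0$ directly and recovering the affine case $c=0$ as a projective limit. For part i), I would first read off from (\ref{hom1}) the three quantities entering the right-hand side of (\ref{hom2}): $\phi(\infty)=a/c$, $\phi(0)=b/d$, and, by solving $ct+d=0$, the pole $\phi^{-1}(\infty)=-d/c$. Substituting these and clearing denominators is then a one-line simplification: the factor $1-t/\phi^{-1}(\infty)$ becomes $(ct+d)/d$, the difference $\phi(0)-\phi(\infty)$ becomes $(bc-ad)/(cd)$, and after combining with $\phi(\infty)=a/c$ the two occurrences of $ad$ cancel, leaving precisely $(at+b)/(ct+d)=\phi(t)$. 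I expect nothing here beyond bookkeeping; the only point needing comment is the degenerate case $c=0$, where $\phi$ is affine and $\phi(\infty)=\phi^{-1}(\infty)=\infty$, which I would dispatch by reading (\ref{hom2}) as a limit in ${\mathbb P}^1_{\mathbb R}$.

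For part ii) I would avoid a second brute-force computation and use two structural facts about the cross-ratio. By the defining formula $(d_1,d_2;d_3,d_4)=\frac{(d_3-d_1)(d_4-d_2)}{(d_3-d_2)(d_4-d_1)}$, the assignment $x\mapsto(\phi(\infty),\phi(0);\phi(1),x)$ is itself a homographic transformation of ${\mathbb P}^1_{\mathbb R}$, and evaluating it at $x=\phi(\infty),\phi(0),\phi(1)$ returns $\infty,0,1$ respectively. On the other hand $\phi^{-1}$ manifestly sends the triple $(\phi(\infty),\phi(0),\phi(1))$ to $(\infty,0,1)$. Since a homography is determined by its values at three distinct points — and the points $\phi(\infty),\phi(0),\phi(1)$ are distinct because $\phi$ is a bijection of ${\mathbb P}^1_{\mathbb R}$ — the two homographies coincide, which is exactly the assertion (\ref{hom3}).

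As an alternative derivation of ii) I would invoke invariance of the cross-ratio under any homography: applying $\phi^{-1}$ to all four entries yields $(\phi(\infty),\phi(0);\phi(1),x)=(\infty,0;1,\phi^{-1}(x))$, and setting $d_1=\infty$ in the defining formula collapses the right-hand side to $\phi^{-1}(x)$. Either route makes ii) essentially immediate. The main thing to watch throughout, and the only place where a little care is genuinely needed, is the consistent treatment of the point at infinity in these cross-ratio identities and in the $c=0$ case of part i); there is no substantive obstacle.
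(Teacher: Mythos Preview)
Your proposal is correct and follows essentially the same approach as the paper: for i) the paper performs exactly the substitution $\phi(\infty)=a/c$, $\phi(0)=b/d$, $\phi^{-1}(\infty)=-d/c$ into the rewritten form (\ref{hom4}), and for ii) it checks the values of the cross-ratio at $x=\phi(0),\phi(1),\phi(\infty)$, which is precisely your three-point argument. You are slightly more thorough in that you make explicit the reason three values suffice and you comment on the $c=0$ case, neither of which the paper addresses; your alternative derivation of ii) via invariance of the cross-ratio is a pleasant variant but not needed.
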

\begin{proof} 
For i) write ${\phi}(t)$ in the following form
\be\label{hom4}
\frac{at+b}{ct+d}=\frac{a}{c}+\left({\frac{b}{d}-\frac{a}{c}}\right)\frac{1}{1+\frac{c}{d}t} 
\ee
and notice that ${\phi}(\infty)=\frac{a}{c},\,\, {\phi}(0)=\frac{b}{d}$ and ${\phi}^{-1}(\infty)=-{\frac{d}{c}}.$

For ii) put $x={\phi}(0)$ (resp. $x={\phi}(1)$ and $x={\phi}(\infty)$ ) in the cross-ratio (\ref{hom3}) and check that  the value is $0$ (resp. $1$ and ${\infty}$).
\end{proof}
%Similar calculation as in the proof of Lemma \ref{hom30} yields
As a consequence one obtains the following:

\begin{lem}\label{hom41}
\begin{itemize}
\item[i)] For any $p\neq \phi(0), \phi(\infty),$
\beg \frac{1}{\phi(t)-p}=\frac{1}{[\phi(0)-p][\phi(\infty)-p]}\left\{[\phi(0)-p]+\frac{\phi(\infty)-\phi(0)}{1-\frac{t}{\phi^{-1}(p)}} \right\},\eeg
\item[ii)] \beg \frac{1}{\phi(t)-\phi(0)}=\frac{1-\frac{\phi^{-1}(\infty)}{t}}{\phi(\infty)-\phi(0)} ,\eeg
\item[iii)] \beg \frac{1}{\phi(t)-\phi(\infty)}=\frac{\frac{t}{\phi^{-1}(\infty)}-1}{\phi(\infty)-\phi(0)}. \eeg

\end{itemize}
\end{lem}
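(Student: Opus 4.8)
The plan is to derive all three identities directly from Lemma \ref{hom30}, whose part i) supplies the normal form $\phi(t)=\phi(\infty)+\frac{\phi(0)-\phi(\infty)}{1-t/\phi^{-1}(\infty)}$. Parts ii) and iii) will fall out by pure algebra from this single formula, while part i) is handled most cleanly by applying the very same normal form to an auxiliary homography rather than to $\phi$ itself.

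First I would prove iii). Subtracting $\phi(\infty)$ from the normal form leaves exactly the fractional term, so $\phi(t)-\phi(\infty)=\frac{\phi(0)-\phi(\infty)}{1-t/\phi^{-1}(\infty)}$; inverting and transferring the overall sign from $\phi(0)-\phi(\infty)$ into the numerator yields the claimed $\frac{t/\phi^{-1}(\infty)-1}{\phi(\infty)-\phi(0)}$. Next I would treat ii) similarly: subtracting $\phi(0)$ and factoring $\phi(\infty)-\phi(0)$ out of the normal form gives $\phi(t)-\phi(0)=[\phi(\infty)-\phi(0)]\cdot\frac{-t/\phi^{-1}(\infty)}{1-t/\phi^{-1}(\infty)}$, after simplifying $1-\frac{1}{1-t/\phi^{-1}(\infty)}=\frac{-t/\phi^{-1}(\infty)}{1-t/\phi^{-1}(\infty)}$; taking reciprocals and reducing the quotient $\frac{1-t/\phi^{-1}(\infty)}{-t/\phi^{-1}(\infty)}=1-\frac{\phi^{-1}(\infty)}{t}$ collapses this to $\frac{1-\phi^{-1}(\infty)/t}{\phi(\infty)-\phi(0)}$. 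Both are one-line computations once the normal form is on the table.

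The key idea, and the only step requiring an actual observation, is i). Rather than manipulating $\phi$ directly, I would apply Lemma \ref{hom30} i) to the auxiliary map $g(t):=\frac{1}{\phi(t)-p}$, which is again a homographic transformation of $\mathbb{P}^1_{\mathbb R}$, being the composition of $\phi$ with $x\mapsto 1/(x-p)$. The hypotheses $p\neq\phi(0),\phi(\infty)$ ensure that $g(0)=\frac{1}{\phi(0)-p}$ and $g(\infty)=\frac{1}{\phi(\infty)-p}$ are finite and that $g$ is nondegenerate. The crucial identification is $g^{-1}(\infty)=\phi^{-1}(p)$, since $g$ has its pole exactly where $\phi(t)=p$. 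Feeding these three data into the normal form of Lemma \ref{hom30} i) gives $\frac{1}{\phi(t)-p}=\frac{1}{\phi(\infty)-p}+\frac{\frac{1}{\phi(0)-p}-\frac{1}{\phi(\infty)-p}}{1-t/\phi^{-1}(p)}$, and placing the two constant terms over the common denominator $[\phi(0)-p][\phi(\infty)-p]$, using $\frac{1}{\phi(0)-p}-\frac{1}{\phi(\infty)-p}=\frac{\phi(\infty)-\phi(0)}{[\phi(0)-p][\phi(\infty)-p]}$, reproduces the stated formula verbatim.

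The main obstacle is purely conceptual and confined to i): recognizing that the efficient route is to invoke the structure lemma on $1/(\phi-p)$ rather than expanding $\phi$ directly, together with the observation that $g^{-1}(\infty)=\phi^{-1}(p)$. Once that substitution is made, everything reduces to combining two fractions over a common denominator. I would also remark that ii) and iii) are the degenerate $p=\phi(0)$ and $p=\phi(\infty)$ boundary cases of i), where one of the factors $\phi(0)-p$ or $\phi(\infty)-p$ vanishes and the partial-fraction form breaks down; this is precisely why they need separate, though entirely trivial, treatment.
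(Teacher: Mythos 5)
Your proof is correct and follows essentially the same route as the paper: the paper likewise proves i) by applying Lemma \ref{hom30} i) to the auxiliary homography $\psi(t)=\frac{1}{\phi(t)-p}$ (with the same identification of its pole at $\phi^{-1}(p)$) and disposes of ii) and iii) as straightforward calculations, which your one-line manipulations of the normal form carry out explicitly.
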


 Proof. Applying  A.1. i) for $\psi (t)=\frac{1}{\phi(t)-p}$ we get  A.2. i). The remaining equalities can be verified by a straightforward calculation.

\qed

{}

\end{document}